\newcommand{\con}{\operatorname{Cont}}
\newcommand{\non}{\operatorname{Non}}
\newcommand{\lin}{\operatorname{Lin}}
\newcommand{\iso}{\operatorname{Isot}}
\newcommand{\var}{\operatorname{var}}
\newtheorem{theorem}{Theorem}[section]
\newtheorem{cor}[theorem]{Corollary}
\newtheorem{fact}[theorem]{Fact}
\newtheorem{lemma}[theorem]{Lemma}
\newtheorem{question}{Question}
\newtheorem{claim}{Claim}
\begin{document}

\title{Lee monoids are non-finitely based while the sets of their isoterms are finitely based}
\author{ Olga Sapir
\footnotesize\em email: olga.sapir@gmail.com\\}
\date{}
\maketitle

\begin{abstract} We establish a new sufficient condition under which a monoid is non-finitely based and apply this condition  to Lee monoids $L_\ell^1$, obtained by adjoining an identity element to the semigroup generated by two idempotents $a$ and $b$ subjected to the relation $0=abab \cdots$ (length $\ell$).

We show that every monoid $M$ which generates a variety containing $L_5^1$ and is contained in the variety generated by $L_\ell^1$
for some $\ell \ge 5$ is non-finitely based. We establish this result by analyzing $\tau$-terms for $M$ where $\tau$ is certain non-trivial congruence  on the free semigroup, that is, we analyze words $\bf u$ with the property that  ${\bf u} \tau {\bf v}$ whenever $M$ satisfies an identity ${\bf u} \approx {\bf v}$.

We also show that if  $\tau$ is the trivial congruence on the free semigroup and $\ell \le 5$ then the $\tau$-terms (isoterms) for $L_\ell^1$ carry no information about the non-finite basis property of $L_\ell^1$.

\vskip 0.1in

\noindent{\bf 2010 Mathematics subject classification}: 20M07, 08B05

\noindent{\bf Keywords and phrases}: Lee monoids, identity, finite basis problem, non-finitely based, variety, isoterm
\end{abstract}

\section{Introduction}

An algebra  is said to be {\em finitely based} (FB) if there is a finite subset of its identities from which all of its identities may be deduced.
Otherwise, an algebra is said to be {\em non-finitely based} (NFB).
Throughout this article, elements of a countably infinite alphabet $\mathfrak A$ are called {\em variables} and elements of the free monoid $\mathfrak A^*$  and free semigroup $\mathfrak A^+$ are called {\em words}.

In 1968, P. Perkins \cite{P} found the first sufficient condition under which a monoid (semigroup with an identity element) is NFB.
By using this condition, he constructed the first two examples of finite NFB semigroups.  The first example was the 6-element Brandt monoid and the second example was the 25-element monoid obtained from the set of words
$W= \{abtba, atbab, abab, aat\}$ by using the following construction attributed to Dilworth.

 Let  $W$ be a set of words in the free monoid ${\mathfrak A}^*$. Let $S^1(W)$ denote the Rees quotient  over the ideal of  ${\mathfrak A}^*$ consisting of all words that are not subwords of words in $W$. For each set of words $W$, the semigroup $S^1(W)$ is a monoid with zero whose nonzero elements are the subwords of words in $W$. We say that $W$ is (\textit{non}-)\textit{finitely based} if the monoid $S^1(W)$ is (non-)finitely based.

 For certainty, we regard monoids here as semigroups, that is, algebras with one operation.  It is mentioned in \cite{MV}  that the finite basis property of a monoid does not depend on whether it is considered as an algebra with one operation or two operations. Indeed, if  an identity  can be derived from a finite set of identities $\Sigma$ by using the substitutions  $\Theta: \mathfrak A \rightarrow \mathfrak A^*$ then it can be also derived from a finite set of identities $\Sigma'$ by using only the substitutions  $\Theta: \mathfrak A \rightarrow \mathfrak A^+$.
(Just take $\Sigma'$ to be the set of all identities obtained by deleting variables in the identities in $\Sigma$.)

If $\tau$ is an equivalence relation on the free semigroup $\mathfrak A^+$ then we say that a word ${\bf u}$ is  a {\em $\tau$-term} for a semigroup $S$ if ${\bf u} \tau {\bf v}$ whenever $S$ satisfies ${\bf u} \approx {\bf v}$.
If $\tau$ is the equality relation on $\mathfrak A^+$ then  $\tau$-term is an {\em isoterm} \cite{P} for $S$.
 We use $\var S$ to refer to the variety of semigroups generated by $S$.
The following result of M. Jackson gives us the fundamental connection between monoids of the form $S^1(W)$ and  isoterms for monoids.

\begin{fact} \label{prec}  \cite[Lemma 3.3]{MJ}
Let $W$ be a set of words and $M$ be a monoid.
Then  $\var M$ contains $S^1(W)$  if and only if every word in $W$ is an isoterm for $M$.
\end{fact}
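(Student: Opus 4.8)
The plan is to prove the two implications separately, using the explicit description of $S^1(W)$: its nonzero elements are precisely the subwords of the words in $W$, with the product of two subwords being their concatenation if that concatenation is again such a subword and $0$ otherwise. Throughout I identify a nonzero element of $S^1(W)$ with the word it represents and the identity element $1$ with the empty word. I assume $W$ contains at least one nonempty word, the remaining case being degenerate ($S^1(W)$ trivial and both sides of the equivalence automatic).

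For the forward implication I would argue by contraposition. Suppose some $\mathbf{w}\in W$ is not an isoterm for $M$, so $M$ satisfies $\mathbf{w}\approx\mathbf{v}$ for some word $\mathbf{v}\neq\mathbf{w}$. If $\var M$ contained $S^1(W)$, then $S^1(W)$ would satisfy $\mathbf{w}\approx\mathbf{v}$ as well, and I would contradict this by producing an evaluation into $S^1(W)$ that separates the two sides. The natural choice is the evaluation $\phi$ sending each variable occurring in $\mathbf{w}$ to the corresponding one-letter element of $S^1(W)$ and every other variable to $1$; then $\phi(\mathbf{w})$ is the nonzero element $\mathbf{w}$. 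A short preliminary observation shows that the content of $\mathbf{v}$ is contained in the content of $\mathbf{w}$: were a variable $y$ to occur in $\mathbf{v}$ but not in $\mathbf{w}$, evaluating $y$ at a letter $a$ of $\mathbf{w}$ and all other variables at $1$ would force $a^{m}=1$ in $S^1(W)$ for some $m\geq 1$, which is impossible. Hence $\phi$ deletes nothing from $\mathbf{v}$, so $\phi(\mathbf{v})$ is the word $\mathbf{v}$ read in $S^1(W)$; the equality $\phi(\mathbf{w})=\phi(\mathbf{v})$ between nonzero elements then forces $\mathbf{v}=\mathbf{w}$ as words, a contradiction.

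For the converse I would invoke Birkhoff's theorem: since $\var M$ is the class of all semigroups satisfying the identities of $M$, it suffices to show that $S^1(W)$ satisfies every identity $\mathbf{p}\approx\mathbf{q}$ of $M$. First I would record that the presence of a nonempty isoterm $\mathbf{w}$ forces every identity of $M$ to preserve content: if some variable $x$ occurred in $\mathbf{p}$ but not in $\mathbf{q}$, then setting every other variable to $1$ would give $M\models x^{k}\approx 1$ for the multiplicity $k\geq 1$ of $x$, whence $M\models \mathbf{w}\approx\mathbf{w}a^{k}$ for the first letter $a$ of $\mathbf{w}$, contradicting that $\mathbf{w}$ is an isoterm. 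With content preserved, the zero cases become immediate and symmetric: if a content variable maps to $0$ under a substitution $\theta$, then both $\theta(\mathbf{p})$ and $\theta(\mathbf{q})$ equal $0$. Otherwise $\theta$ restricts to a word substitution $\sigma$ on the common content, and $\theta(\mathbf{p})\neq 0$ means $\sigma(\mathbf{p})$ is a subword of some $\mathbf{w}\in W$, say $\mathbf{w}=\alpha\,\sigma(\mathbf{p})\,\beta$. Since $M\models\mathbf{p}\approx\mathbf{q}$ yields $M\models\sigma(\mathbf{p})\approx\sigma(\mathbf{q})$ and hence $M\models\mathbf{w}\approx\alpha\,\sigma(\mathbf{q})\,\beta$, the isoterm property of $\mathbf{w}$ gives $\mathbf{w}=\alpha\,\sigma(\mathbf{q})\,\beta$; cancelling $\alpha$ and $\beta$ in the free monoid leaves $\sigma(\mathbf{p})=\sigma(\mathbf{q})$, i.e. $\theta(\mathbf{p})=\theta(\mathbf{q})$.

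The main obstacle, and the genuinely nontrivial heart of the argument, is this padding-and-cancellation step in the converse: recognizing that an arbitrary nonzero value $\sigma(\mathbf{p})$ sits inside some word of $W$, embedding the derived identity into that word, and then extracting equality of $\sigma(\mathbf{p})$ and $\sigma(\mathbf{q})$ from the isoterm hypothesis via cancellativity of $\mathfrak{A}^{+}$. Everything else — the content bookkeeping and the organization of the $0$ and empty-word cases — is routine once content preservation is in hand, so I would present that preservation lemma first and then dispatch the cases against it.
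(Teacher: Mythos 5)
Your proposal is correct, but the comparison needs a caveat: the paper never proves Fact~\ref{prec} itself (it is quoted from \cite[Lemma 3.3]{MJ}); what the paper proves is the generalization, Lemma~\ref{prec1}, whose proof specializes to a proof of the fact when $\tau$ is equality, so that is the benchmark. Your overall architecture matches that proof: the forward direction via a tautological evaluation of variables in $S^1(W)$, and the converse by checking every identity ${\bf p}\approx{\bf q}$ of $M$ under every substitution into $S^1(W)$, split into the all-zero case, the nonzero case, and the identity-element case (your front-loaded content-preservation lemma plays the role of the regularity argument the paper defers to its Case 3). The genuine divergence is in the nonzero case. Because Fact~\ref{prec} allows an arbitrary $W$, a nonzero value $\sigma({\bf p})$ is merely a \emph{subword} of some ${\bf w}\in W$, and your padding-and-cancellation step --- embedding the instance $\sigma({\bf p})\approx\sigma({\bf q})$ into ${\bf w}$ and cancelling $\alpha,\beta$ in the free monoid --- is exactly what bridges that gap; it amounts to proving that for a monoid the set of isoterms is closed under taking subwords. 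The paper's Lemma~\ref{prec1} never needs this step, because subword-closure of $W$ is built into its hypotheses: there the nonzero value lies in $W$ itself, and the $\tau$-term hypothesis applies to it directly. So your argument is precisely the extra content required to recover the unrestricted statement from the subword-closed version, and it is sound. One simplification worth adopting from the paper's forward direction: send \emph{every} variable (not just those of ${\bf w}$) to its own one-letter image in $S^1(W)$; then $\phi({\bf v})$ is automatically the element ${\bf v}$ or $0$, and your content-containment detour (the $a^m=1$ argument) becomes unnecessary.
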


Given a monoid $M$ we use  $\iso(M)$ to denote the set of all words in $\mathfrak A^*$ that are isoterms for $M$. Using Fact \ref{prec} it is easy to show
that $W=\iso(M)$ is the largest subset of $\mathfrak A^*$ such that $S^1(W)$ is contained in  $\var M$ (see Fact 8.1 in \cite{OS1}).

A locally finite algebra is said to be {\em inherently not finitely based} (INFB) if any locally finite variety containing it is NFB.
According to Proposition 7 in \cite{MS}, a finite semigroup $S$ is INFB  if and only if every Zimin word (${\bf Z}_1=x_1, \dots, {\bf Z}_{k+1} = {\bf Z}_kx_{k+1}{\bf Z}_k, \dots$)  is an isoterm for $S$.  This result of M. Sapir together with Proposition 3 in \cite{MS} imply that if $M$ is a finite INFB monoid then the set  $\iso(M)$ is NFB.  Proposition 7 in \cite{MS} also implies that the Brandt monoid is INFB and consequently, the set of its isoterms is non-finitely based.

For the majority of the aperiodic monoids which are known to be NFB but not INFB, their non-finite basis property can be established by
exhibiting a certain finite set of words $W$, a certain set of identities $\Sigma$ (without any bound on the number of variables involved) and proving the following statement:

 $\bullet$ If a monoid $M$ satisfies all identities in $\Sigma$ and all the words in $W$ are isoterms for $M$, then $M$ is NFB.

If the non-finite basis property of a monoid $M$ is established by a sufficient condition of this form, then evidently, the set $\iso(M)$ is also NFB.

We say that a word $\bf u$ has {\em the same type} as $\bf v$ if $\bf u$ can be obtained from $\bf v$ by changing the individual exponents of variables. For example, the words $x^2yxzx^5y^2xzx^3$ and
 $xy^2x^3zxyx^2zx$  are of the same type.
In this article, we present a new sufficient condition (see Theorem \ref{main} below) under which a monoid is non-finitely based.  Theorem \ref{main}
exhibits a certain finite set of words $W$, a certain set of identities $\Sigma$ (without any bound on the number of variables involved) and
states the following:

 $\bullet$ If a monoid $M$ satisfies all identities in $\Sigma$ and every word in $W$ can form an identity of $M$ only with a word of the same type, then $M$ is NFB.

Recently,  E. Lee suggested to investigate the finite basis property of semigroups
\[
L_\ell = \langle a,b \mid aa=a, bb=b, \underbrace{ababab\cdots}_{\text{length }\ell} = 0 \rangle, \quad \ell \geq 2
\]
and the monoids $L_\ell^1$ obtained by adjoining an identity element to $L_\ell$.

The 4-element semigroup $L_2 =A_0$ is  long known to be finitely based \cite{1980}.
W. Zhang and Y. Luo proved \cite{WTZ1} that the 6-element semigroup $L_3$ is NFB and E. Lee generalized this
result into a sufficient condition  \cite{EL} which implies that for all $\ell \geq 3$, the semigroup $L_\ell$ is NFB  \cite{EL1}.

 As for the monoids $L_\ell^1$, the 5-element monoid $L_2^1$ was also proved to be FB by C. Edmunds \cite{1977}, while the 7-element monoid $L_3^1$ is recently shown to be NFB by W. Zhang \cite{WTZ}. E. Lee conjectured that the monoids $L^1_\ell$ are NFB for  all $\ell \geq 3$. Theorem \ref{main} implies that for each $\ell \ge 5$ the monoid $L^1_\ell$ is NFB.  This leaves the 9-element monoid $L_4^1$ the only unsolved case in the finite basis problem for the monoids $L_\ell^1$.

We prove Theorem \ref{main} by using the general method in \cite{OS}. This general method can be used to establish the majority of existing sufficient conditions under which a semigroup is NFB. In particular, it can also be used to reprove the sufficient condition of Lee \cite{EL} which implies that  for all $\ell \geq 3$, the semigroup $L_\ell$ is NFB. (The proof is the same as the one of Theorem 5.2 in \cite{OS} but it uses Lemma 14 in \cite{EL} instead of Lemma 13 in \cite{EL2}.)  Thus, this method can be used to establish the non-finite basis properties of both: Lee semigroups and Lee monoids.

In Section 7 we introduce monoids of the form $S^1_\tau(W)$ and
show that both Lee monoids and the monoids of the form $S^1(W)$ can be viewed as special cases of this general construction.  We also generalize Fact \ref{prec} into Lemma \ref{prec1} which  gives us the
 connection between monoids of the form $S^1_\tau(W)$ and $\tau$-terms when $\tau$ is not necessarily the equality relation on $\mathfrak A^+$.

\section{A sufficient condition under which a monoid is non-finitely based}

If $\bf u$ is a word and $x \in \con({\bf u})$ then an {\em island} formed by $x$  in {\bf u} is a maximal subword of $\bf u$ which is a power of $x$. For example, the word $xyyx^5yx^3$ has  three islands formed by $x$ and two islands formed by $y$.
 We use $x^+$ to denote $x^n$ when $n$ is a positive integer and its exact value is unimportant.
If $\bf u$ is a word over a two-letter alphabet then the {\em height} of $\bf u$ is the number of islands in $\bf u$. For example,
the word $x^+$ has height 1,  $x^+y^+$ has height 2,   $x^+y^+x^+$ has height 3, and so on.
For each $\ell \ge 2$ consider the following property of a semigroup $S$.

$\bullet$ (C$_\ell$) If the height of ${\bf u} \in \{x,y\}^+$ is at most $\ell$, then
$\bf u$ can form an identity of $S$ only with a word of the same type.

The following words were used  by M. Jackson to prove Lemma 5.4 in \cite{MJ}:
\[ {\bf J}_{n} = (x_1 x_{1 + n} \dots x_{1 + n^2 - n})(x_2 x_{2 + n} \dots x_{2+n^2 -n}) \dots  (x_n x_{2n} \dots x_{n^2}), \quad  n>3. \]
For example,
\[ {\bf J}_{4} = (x_1 x_{5} x_9 x_{13})(x_2 x_{6} x_{10} x_{14})(x_3 x_{7} x_{11} x_{15})(x_4 x_{8} x_{12} x_{16}).\]
We generalize Jackson words slightly as follows:
\[ {\bf J}_{n, k} = (x^k_1 x^k_{1 + n} \dots x^k_{1 + n^2 - n})(x^k_2 x^k_{2 + n} \dots x^k_{2+n^2 -n}) \dots  (x^k_n x^k_{2n} \dots x^k_{n^2}),  n>3, k>0.\]

  Notice that the words ${\bf J}_n$ and ${\bf J}_{n,k}$ are of the same type for all $n>3$ and $k>0$.
We use $\overline{{\bf u}}$ to denote the reverse of a word ${\bf u}$.
The following theorem gives us a sufficient condition under which a monoid is NFB  and will be proved in Section \ref{sec:thm}.

\begin{theorem} \label{main}  Let $M$ be a monoid that satisfies Property (C$_5$).
If for each $n>3$, $M$ satisfies the identity

\begin{equation} \label{e1} {\bf U}_n = (x_1 x_2 \dots x_{n^2-1}x_{n^2}) \hskip.04in   {\bf J}_{n, k}  \hskip.04in (x_{n^2} x_{n^2-1} \dots x_2 x_1) \approx\end{equation}
\[(x_1 x_2 \dots x_{n^2-1}x_{n^2}) \hskip.04in   \overline{{\bf J}_{n, k}}    \hskip.04in ( x_{n^2} x_{n^2-1} \dots x_2 x_1)  = {\bf V}_n.\]  for some $k \ge 1$, then  $M$ is NFB.
\end{theorem}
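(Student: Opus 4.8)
The plan is to verify the standard criterion for being NFB: it suffices to show that for every positive integer $N$ the identity ${\bf U}_n \approx {\bf V}_n$, for a suitably large $n = n(N)$, holds in $M$ (which it does by hypothesis) yet is not a consequence of $\Sigma_N$, the set of all identities of $M$ in at most $N$ variables. Indeed, were $M$ finitely based, all of its identities would follow from $\Sigma_N$ for some fixed $N$, so producing one identity outside the deductive closure of each $\Sigma_N$ forces $M$ to be NFB. Fix $N$, choose $n$ large relative to $N$ (so that $n$ exceeds the number of variables any single identity of $\Sigma_N$ can involve), and assume for contradiction a derivation ${\bf U}_n = {\bf W}_0 \to {\bf W}_1 \to \cdots \to {\bf W}_m = {\bf V}_n$ in which each step applies an identity of $\Sigma_N$ inside a context.

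First I would extract rigidity from Property (C$_5$). For any two variables $x_i, x_j$ let $\pi_{ij}$ be the projection deleting every other variable. Since each deduction step replaces a factor $\theta({\bf s})$ by $\theta({\bf t})$ with ${\bf s} \approx {\bf t} \in \Sigma_N$, applying $\pi_{ij}$ turns the step into a substitution instance of an identity of $M$ over the two letters $x_i, x_j$; because $\pi_{ij}({\bf U}_n)$ has type $xyxyx$ and hence height $5$, every factor met during the derivation has height at most $5$, so (C$_5$) applies and the island type of $\pi_{ij}({\bf W}_t)$ is preserved, remaining $xyxyx$ throughout. In particular the first and last letters of each projection are fixed, which shows that along the whole derivation the order of first occurrences of $x_1, \dots, x_{n^2}$ is the identity order $x_1, x_2, \dots$ and the order of last occurrences is its reverse, exactly as in ${\bf U}_n$ and ${\bf V}_n$. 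Thus the prefix $x_1 \cdots x_{n^2}$ and suffix $x_{n^2} \cdots x_1$ act as a rigid scaffold, and the only quantity free to change is the relative order of the variables in the central part of the word.

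The core of the argument is to control this central order. Reading ${\bf J}_{n,k}$ arranges the variables $x_i$ in the row-major order of the $n \times n$ grid whose entry in row $r$ and column $c$ is $x_{r + nc}$, whereas the rigid scaffold records the column-major (identity) order; passing from ${\bf J}_{n,k}$ to $\overline{{\bf J}_{n,k}}$ reverses the row-major order globally. A single step of the derivation replaces $\theta({\bf s})$ by $\theta({\bf t})$, and since ${\bf s} \approx {\bf t}$ involves at most $N$ variables, the images $\theta(z)$ of these variables partition the affected occurrences into a bounded number of blocks whose internal order is untouched; consequently one step can only rearrange the central order by permuting at most $N$ such blocks. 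I would therefore attach to the central order a grid invariant $\Phi$ that is left unchanged by every permutation of at most $N$ blocks yet separates the row-major order of the full grid from its reverse once $n(N)$ is large enough that $N$ blocks cannot meet all $n$ rows and columns. Since $\Phi$ is preserved by each step but $\Phi({\bf U}_n) \neq \Phi({\bf V}_n)$, no such derivation exists, giving the required contradiction and proving $M$ NFB.

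The hard part will be the previous paragraph, and it has two linked difficulties. First, I must turn the heuristic ``at most $N$ blocks'' into a precise statement about how an arbitrary identity of $\Sigma_N$ --- not merely the small instances ${\bf U}_m \approx {\bf V}_m$ that already lie in $\Sigma_N$ --- may act on the central order; this is where (C$_5$) is indispensable, as it forbids any small identity of $M$ from scrambling the interleaving of distinct rows or columns of the grid and so confines its effect to a genuinely block-wise rearrangement. Second, I must exhibit an invariant $\Phi$ that is simultaneously exactly preserved by every admissible block move and sensitive to the global reversal; the transpose relationship between the column-major scaffold and the row-major central order of ${\bf J}_{n,k}$, together with the precise bound $5$ in (C$_5$) (which is exactly what pins down the relevant two-letter projections once two grid variables are merged to a single letter), is what I expect to make such a $\Phi$ available. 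Granting these two points, the reduction in the first two paragraphs closes the argument.
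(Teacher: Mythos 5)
Your overall framework is the same as the paper's (it is exactly Lemma \ref{nfblemma} with $\tau$ being ``same type''): assume a derivation of ${\bf U}_n \approx {\bf V}_n$ from identities of $M$ in fewer than $n$ variables and kill it with an invariant preserved by every step. Your projection observation is also essentially right: since $M$ is a monoid, deleting letters preserves its identities, so (C$_5$) forces every two-letter projection ${\bf W}_t(x_i,x_j)$ to keep the type $x^+y^+x^+y^+x^+$ along the derivation. But, as you yourself note, this cannot finish the proof, because ${\bf U}_n$ and ${\bf V}_n$ have two-letter projections of the same type for \emph{every} pair $i<j$; and at exactly that point your argument stops being a proof. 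The invariant $\Phi$ is never constructed --- you explicitly defer both the analysis of what a single step can do and the existence of $\Phi$, and these are not finishing touches but the entire content of the theorem. Worse, the combinatorial premise you propose is false as stated: a step replaces $\theta({\bf s})$ by $\theta({\bf t})$ where ${\bf s},{\bf t}$ contain at most $N$ \emph{distinct} variables but unboundedly many \emph{occurrences} of them, so $\theta({\bf s})$ splits into as many blocks as the length of ${\bf s}$, not into at most $N$ blocks; these blocks recur interleaved, and occurrence counts may differ between ${\bf s}$ and ${\bf t}$, so blocks can also be duplicated or deleted (already the two-variable identity $xyyxyx \approx xyxyyx$ of $L_6^1$ shifts an occurrence between islands rather than permuting contiguous blocks). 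So ``choose $n$ large relative to the number of blocks'' has no starting point, and any $\Phi$ invariant under all legal moves would need precisely the analysis you have skipped.

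The paper's resolution is simpler than a bespoke grid invariant: the invariant is the type of the whole word, and the fact that ${\bf U}_n$ and ${\bf V}_n$ are not of the same type is witnessed by the two-letter subword $x_{n^2}x_1$, present in ${\bf U}_n$ but absent from ${\bf V}_n$. Preservation under a step $\Theta({\bf u}) \to \Theta({\bf v})$, where ${\bf u} \approx {\bf v}$ is an identity of $M$ in fewer than $n$ variables and $\Theta({\bf u})$ has the type of ${\bf U}_n$, comes from two properties of the Jackson words (Lemma \ref{dis}): by (P1), each word $x_ix_j$ occurs at most once as a subword, which forces $\Theta$ to send every non-linear variable of ${\bf u}$ to a power of a single letter (this is what rules out your intricate rearrangements and makes Lemma \ref{redef} applicable); by (P2), at least $n$ distinct variables separate any two islands of any $x_i$, so, since ${\bf u}$ has fewer than $n$ variables, some \emph{linear} variable of ${\bf u}$ must lie between any two islands of each non-linear variable of ${\bf u}$. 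These are exactly conditions (i)--(ii) of Lemma \ref{correct}, whose proof (through Lemmas \ref{basic}, \ref{begend}, \ref{2let}) is where (C$_5$) does its real work --- not on projections of the long words ${\bf W}_t$, but on the short words ${\bf u},{\bf v}$, yielding that ${\bf v}$ has the same type as ${\bf u}$; Fact \ref{Eu}, Lemma \ref{prop1} and Lemma \ref{redef} then transport this conclusion through $\Theta$. In short, the missing core is to use the rigidity of the Jackson structure to collapse the substitution to letter-to-power form and then invoke the (C$_5$)-consequence for few-variable identities, rather than to bound combinatorially what a derivation step can do.
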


We use $\con({\bf u})$ to denote the set of all variables contained in a word ${\bf u}$.
 An identity ${\bf u} \approx {\bf v}$ is called {\em regular} if  $\con({\bf u}) =  \con({\bf v})$.
The following lemma will be generalized and reversed in Corollary \ref{final}.

\begin{lemma} \label{L51} For each $\ell \ge 2$ the monoid $L_\ell^1$ satisfies Property (C$_\ell$).  In other words,
 if $L_\ell^1 \models {\bf u}\approx {\bf v}$ such that $\con({\bf u}) = \{x,y\}$ and the height of ${\bf u}$ is at most $\ell$, then $\bf v$ is of the same type as $\bf u$.
\end{lemma}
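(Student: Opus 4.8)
The plan is to test the identity ${\bf u}\approx{\bf v}$ inside $L_\ell$ (a subsemigroup of $L_\ell^1$, so the identity is inherited) under the two substitutions $\sigma\colon x\mapsto a,\ y\mapsto b$ and $\sigma'\colon x\mapsto b,\ y\mapsto a$, and to recover the island skeleton of each side from the values these substitutions produce. The key reduction is that for a word over $\{x,y\}$ the islands alternate between $x$ and $y$, so the type of such a word is completely determined by its height together with its first letter; it therefore suffices to show that $\bf v$ has the same height as $\bf u$ and begins with the same variable.

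First I would record the arithmetic of $L_\ell$ that drives everything. Collapsing equal adjacent letters, every element of $L_\ell$ is represented by a reduced alternating word in $a,b$, and the ideal generated by $w_0=\underbrace{abab\cdots}_{\ell}$ meets the alternating words only in $w_0$ itself together with those of length exceeding $\ell$. Hence the nonzero elements of $L_\ell$ are exactly the alternating words of length $1,\dots,\ell-1$ (two of each length) together with the single word $\underbrace{baba\cdots}_{\ell}$ of length $\ell$; in particular two nonzero alternating words coincide in $L_\ell$ if and only if they have the same length and the same first letter. Because $a$ and $b$ are idempotent and consecutive islands of a word over $\{x,y\}$ carry different variables, $\sigma$ sends a word $\bf w$ of height $h$ to the alternating word of length $h$ whose first letter is $a$ or $b$ according as $\bf w$ begins with $x$ or $y$, and symmetrically for $\sigma'$.

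Since $L_\ell^1$ is a nontrivial monoid it satisfies only regular identities---otherwise a variable lying in exactly one of $\con({\bf u}),\con({\bf v})$ could be sent to $a$ and every other variable to the adjoined identity $1$, forcing $a=1$---so $\con({\bf v})=\con({\bf u})=\{x,y\}$ and $\bf v$ is again a word over $\{x,y\}$ with a well-defined height. Writing $h$ for the height of $\bf u$ and assuming $\bf u$ begins with $x$ (the other case being symmetric), I would argue as follows. If $h\le\ell-1$, then $\sigma({\bf u})=\underbrace{abab\cdots}_{h}$ is a nonzero element of $L_\ell$; applying $\sigma$ to ${\bf u}\approx{\bf v}$ and invoking the uniqueness statement above forces $\bf v$ to have height $h$ and to begin with $x$, which is exactly the required conclusion.

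The only delicate case is $h=\ell$, and this is where the asymmetry of the presentation is essential: here $\sigma({\bf u})=w_0=0$ carries no information, so I would instead apply $\sigma'$. Now $\sigma'({\bf u})=\underbrace{baba\cdots}_{\ell}$ is precisely the unique nonzero length-$\ell$ element of $L_\ell$, and $\sigma'({\bf v})$ can equal it only if $\bf v$ has height $\ell$ and $\sigma'({\bf v})$ begins with $b$, i.e.\ only if $\bf v$ begins with $x$; once more $\bf v$ has the same type as $\bf u$. I expect this height-$\ell$ step to be the main obstacle, precisely because exactly one of the two length-$\ell$ alternating words is killed in $L_\ell$, so one must take care to invoke the substitution that lands on the surviving word rather than on $0$.
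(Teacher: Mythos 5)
Your proposal is correct and follows essentially the same route as the paper: evaluate the identity under a substitution sending $\{x,y\}$ onto $\{a,b\}$ and use the fact that the nonzero elements of $L_\ell$ are exactly the alternating words determined by their length and first letter. The only difference is one of explicitness: the paper uses the single substitution $\Theta(x)=b$, $\Theta(y)=a$ (implicitly naming variables so that $\Theta({\bf u})$ lands on the surviving alternating word of length at most $\ell$), whereas you spell out the case split at height exactly $\ell$ and choose between $\sigma$ and $\sigma'$ accordingly, which is precisely the delicate point the paper's terse proof glosses over.
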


\begin{proof} Since the word $x$ is an isoterm for $L_\ell^1$, the monoid $L_\ell^1$ satisfies only  regular identities. In particular, $\con({\bf v}) =\{x,y\}$. If $\bf v$ is not of the same type as $\bf u$ then consider the substitution  $\Theta: \mathfrak A \rightarrow L_\ell^1$  such that $\Theta(x)=b$ and $\Theta(y)=a$. Then $\Theta({\bf u})$ is a subword of $\underbrace{bababa\cdots}_{\text{length }\ell} \ne 0$ and $\Theta({\bf u}) \ne \Theta({\bf v})$.

Therefore, $\bf v$ must be of the same type as $\bf u$.
\end{proof}

Theorem \ref{main} and Lemma \ref{L51} immediately imply the following.

\begin{cor} \label{L52} Let $M$ be a monoid such that $\var(M)$ contains $L_5^1$.
If for each $n>3$, $M$ satisfies the identity \eqref{e1} for some $k \ge 1$,
then $M$ is NFB.
\end{cor}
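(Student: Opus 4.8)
The plan is to deduce the corollary directly from Theorem~\ref{main} by showing that the hypothesis that $\var(M)$ contains $L_5^1$ already forces $M$ to satisfy Property~(C$_5$). Once that is established, the assumption that $M$ satisfies the identity \eqref{e1} for each $n>3$ and some $k\ge 1$ is precisely the remaining premise of Theorem~\ref{main}, and the conclusion that $M$ is NFB follows at once.

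The first step is to recall the standard link between variety membership and identities. Since $L_5^1\in\var(M)$, every identity satisfied by $M$ holds throughout $\var(M)$ and is therefore inherited by $L_5^1$; concretely, $M\models{\bf u}\approx{\bf v}$ implies $L_5^1\models{\bf u}\approx{\bf v}$. The only point to keep straight here is the direction of this implication: membership of $L_5^1$ in the generated variety pushes identities \emph{down} to $L_5^1$, not the reverse.

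Using this, I would verify Property~(C$_5$) for $M$. Take any word ${\bf u}\in\{x,y\}^+$ of height at most $5$ and suppose $M\models{\bf u}\approx{\bf v}$. By the previous step $L_5^1\models{\bf u}\approx{\bf v}$, and Lemma~\ref{L51} applied with $\ell=5$ then guarantees that $\bf v$ is of the same type as $\bf u$. As $\bf u$ and $\bf v$ were arbitrary, this is exactly the statement that $M$ satisfies~(C$_5$). Combining this with the standing hypothesis on \eqref{e1}, all premises of Theorem~\ref{main} are met, so $M$ is NFB. There is no genuine obstacle in this argument: its entire content is the transfer of identities to $L_5^1$ through Lemma~\ref{L51}, after which the non-finite basis conclusion is quoted directly from Theorem~\ref{main}.
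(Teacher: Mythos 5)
Your proposal is correct and is exactly the argument the paper leaves implicit when it says the corollary follows "immediately" from Theorem~\ref{main} and Lemma~\ref{L51}: since $L_5^1\in\var(M)$, every identity of $M$ holds in $L_5^1$, so Lemma~\ref{L51} with $\ell=5$ transfers Property~(C$_5$) to $M$, and Theorem~\ref{main} then applies. You also correctly identified the one point where a mistake could creep in, namely the direction in which identities are inherited.
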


The next lemma shows that the identities \eqref{e1}  belong to a wider class of identities satisfied by $L_\ell^1$ for each $\ell \ge 1$.

\begin{lemma} \label{first} Let $k\ge 2$ and  ${\bf X}$ be a word such that $\con({\bf X}) =\{x_1, \dots,x_n\}$ and for each $1\le i \le n$, $occ_{\bf X}(x_i) \ge k-1$. Then for each $n>0$, the monoids
\[ L_{2k}^1 =  \langle a,b,1 \mid aa=a, bb=b, (ab)^{k}=0 \rangle \hskip.3in  \text{and}\]
\[L_{2k+1}^1 =  \langle a,b,1 \mid aa=a, bb=b, (ab)^ka=0 \rangle\]
 satisfy the following identity:

\[ {\bf U}_n = x_1 x_2 \dots x_{n-1}x_{n} \hskip.03in {\bf X} \hskip.03in x_{n} x_{n-1} \dots x_2 x_1 \approx  \]
\[x_1 x_2 \dots x_{n-1}x_{n} \hskip.03in \overline{{\bf X}} \hskip.03in  x_{n} x_{n-1} \dots x_2 x_1 = {\bf V}_n.\]

\end{lemma}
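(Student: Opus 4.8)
My plan is to verify the identity directly on substitutions: since $L_{2k}^1$ and $L_{2k+1}^1$ are finite it suffices to show $\Theta({\bf U}_n)=\Theta({\bf V}_n)$ for every substitution $\Theta\colon\mathfrak A\to L_\ell^1$, where $\ell=2k$ or $\ell=2k+1$. First I would record the structure of $L_\ell^1$: apart from $1$ and $0$, every element is a \emph{reduced} alternating word over $\{a,b\}$ (no two equal adjacent letters, since $a,b$ are idempotent), and such a word equals $0$ exactly when it contains the factor $P_\ell=\underbrace{ab\cdots}_{\ell}$; in particular, any alternating word of reduced length at least $\ell+1$ is $0$. The key preliminary remark is that a nonzero element is completely determined by its first letter together with its reduced length. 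Since ${\bf U}_n$ and ${\bf V}_n$ share the prefix $x_1\cdots x_n$ and the suffix $x_n\cdots x_1$, writing $s_i=\Theta(x_i)$ one sees that $\Theta({\bf U}_n)$ and $\Theta({\bf V}_n)$ have the same first letter automatically (the leftmost letter of the image of the smallest variable with $s_i\neq 1$). Hence only the reduced \emph{lengths} need to be compared, together with the question of when both images are $0$.

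Next I would dispose of the degenerate substitutions. If some $s_i=0$ then, as $\con({\bf U}_n)=\con({\bf V}_n)$, both sides are $0$; and any variable with $s_i=1$ may be deleted from both words at once, which preserves the shape of the identity (the surviving variables still occur once in increasing order on the left, once in decreasing order on the right, and at least $k-1$ times in the middle). So I may assume every $s_i$ is a nonempty alternating word. I would then let $P$ be the reduced product $\prod s_i$ in increasing index order, $Q$ the reduced product in decreasing index order, $M$ the reduced form of $\Theta({\bf X})$ and $M'$ that of $\Theta(\overline{{\bf X}})$, and use the elementary rule that for reduced alternating words $u,v$ one has $|uv|=|u|+|v|-\varepsilon(u,v)$, where $\varepsilon(u,v)\in\{0,1\}$ is $1$ precisely when the last letter of $u$ equals the first letter of $v$. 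Applied twice, this gives closed expressions for $L_U=|\Theta({\bf U}_n)|$ and $L_V=|\Theta({\bf V}_n)|$ in terms of $|P|,|Q|,|M|,|M'|$ and four boundary indicators.

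The argument then splits according to whether any variable is stretched. In the first case every surviving $s_i$ is a single letter. Here $\Theta(\overline{{\bf X}})$ is literally the reverse of $\Theta({\bf X})$, so $M'=\overline{M}$ and $|M'|=|M|$; moreover $P$ ends, and $Q$ begins, with the image of the largest surviving index, so the last letter of $P$ equals the first letter of $Q$. Feeding these two facts into the length expressions, the pair of boundary corrections subtracted in $L_U$ and the pair subtracted in $L_V$ turn out to be the same two indicators, whence $L_U=L_V$. Combined with the automatic agreement of first letters, this forces $\Theta({\bf U}_n)=\Theta({\bf V}_n)$ (equal nonzero elements, or both $0$).

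The main obstacle is the second case, where some $x_j$ has $|s_j|\ge 2$; here I would show both sides are simply $0$. The decisive input is a count of the letter-changes (equivalently, the reduced length) of $\Theta({\bf U}_n)$, and this is exactly where the hypotheses are spent: $x_j$ occurs once in the prefix, once in the suffix, and at least $k-1$ times in ${\bf X}$, hence at least $k+1$ times in ${\bf U}_n$. Each occurrence of $s_j$ keeps at least one internal letter-change after reduction, and between two consecutive occurrences the word must pass from the last letter of one copy of $s_j$ to the first letter of the next; when $s_j$ has even length these differ, forcing an extra change in each of the $k$ gaps and giving at least $2k+1$ changes in all, while when $|s_j|\ge 3$ the at least $2(k+1)$ internal changes already suffice. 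Either way the reduced length is at least $2k+2>\ell$, so $\Theta({\bf U}_n)$ contains $P_\ell$ and equals $0$; the identical count applies to $\Theta({\bf V}_n)$, since $\overline{{\bf X}}$ has the same occurrences of $x_j$. Making the gap-changes provably distinct from the internal ones is the one genuinely delicate point, and I would isolate it as a short sublemma on reduced products of alternating words.
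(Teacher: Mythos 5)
Your proof is correct, and its overall decomposition matches the paper's: both proofs exploit that every variable occurs at least $k+1$ times in ${\bf U}_n$ and ${\bf V}_n$, both show that any substitution sending some $x_i$ to an element containing both $a$ and $b$ annihilates the two sides, and both then settle the remaining case where every image is $a$, $b$ or $1$. The difference lies in how each half is executed. For the single-letter case the paper makes a three-way case analysis on $\Theta({\bf X})$: if it starts and ends with the same letter it is a palindrome as a reduced alternating word, so $\Theta(\overline{{\bf X}})=\Theta({\bf X})$ and the sides coincide outright; if it equals $(ab)^m$ or $(ba)^m$, a one-line computation with the flanking $ab$ and $ba$ coming from the prefix and suffix gives equality. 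You replace this by a uniform invariant argument --- same first letter (shared prefix) plus equal reduced length determines the element --- where equality of lengths follows from your boundary-indicator bookkeeping and the fact that your $P$ ends, and $Q$ begins, with the same letter; this absorbs all three of the paper's cases at once, at the price of setting up the length formula. For the mixed case the paper simply asserts that both sides contain $(ab)^{k+1}$ or $(ba)^{k+1}$ and hence equal $0$; your letter-change count (at least one internal change per occurrence of $s_j$, plus one per gap when $|s_j|$ is even, so at least $2k+1$ changes and reduced length at least $2k+2>\ell$) is precisely the justification the paper leaves implicit, and the sublemma you flag --- that gap changes occur at adjacent positions disjoint from the internal ones --- is genuine but routine. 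In short, the paper's proof is shorter because it leans on palindrome symmetry and treats the zero case as evident; yours is more self-contained and makes every quantitative step explicit.
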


\begin{proof}
 First, notice that each variable appears at least $k+1$ times in ${\bf U}_n$ and ${\bf V}_n$.
Fix some substitution $\Theta: \mathfrak A \rightarrow L^1_{2k}$ ($L^1_{2k+1}$). If for some $1 \le i \le n$, the set  $\con(\Theta(x_i))$ contains both $a$ and $b$ then both  $\Theta({\bf U}_n)$ and $\Theta({\bf V}_n)$ contain $(ab)^{k+1}$ or $(ba)^{k+1}$ as a subword and consequently, both are equal to zero. Therefore, we may assume that for each $1 \le i \le n$ we have $\Theta(x_i) \in \{a,b,1\}$.
To avoid some trivial cases we may also assume that $\Theta (x_1 x_2 \dots x_{n-1}x_{n})$ contains both letters $a$ and $b$. Consider a few cases.

{\bf Case 1}: $\Theta({\bf X})$ starts and ends with the same letter: $a$ or $b$.

In this case,  $\Theta( \overline{{\bf X}}) =  \Theta( {\bf X})$ and consequently,  $\Theta({\bf U}_n) = \Theta({\bf V}_n)$.

{\bf Case 2}:  $\Theta( {\bf X}) = (ab)^m$ for some $m >0$.

In this case,  $\Theta( \overline{{\bf X}}) = (ba)^m$ and consequently,
\[\Theta({\bf U}_n) = (a b)(ab)^{m} (b a) = (a b)( ba)^{m} (b a ) = \Theta({\bf V}_n).\]

{\bf Case 3}:  $\Theta( {\bf X}) = (ba)^m$ for some $m >0$.

This case is dual to Case 2. \end{proof}

Corollary \ref{L52} and Lemma \ref{first} immediately imply the following.

\begin{cor} \label{L5}  Let $M$ be a monoid such that $M$  is contained in  $\var(L_\ell^1)$ for some $\ell \ge 5$
and $\var(M)$ contains $L_5^1 =  \langle a,b,1 \mid aa=a, bb=b, ababa=0 \rangle$. Then $M$  is NFB.
\end{cor}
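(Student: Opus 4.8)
The plan is to obtain the corollary as a direct composition of Corollary \ref{L52} and Lemma \ref{first}, with no new ideas required beyond careful bookkeeping. Since we are given that $\var(M)$ contains $L_5^1$, Corollary \ref{L52} reduces the whole problem to the following: for each $n>3$, exhibit one admissible exponent for the Jackson word so that $M$ satisfies identity \eqref{e1}. To keep the two roles of the letter $k$ apart, I would write the exponent inside the Jackson word as $m$, reserving $k$ for the index of the monoids $L_{2k}^1$ and $L_{2k+1}^1$ appearing in Lemma \ref{first}. Thus the entire task becomes: produce, for every $n>3$, some $m\ge 1$ with $M\models {\bf U}_n\approx {\bf V}_n$, where the middle factor is ${\bf J}_{n,m}$.

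First I would invoke the hypothesis $M\subseteq \var(L_\ell^1)$: every identity satisfied by $L_\ell^1$ is then automatically satisfied by $M$, so it suffices to verify \eqref{e1} for $L_\ell^1$ itself. The identity \eqref{e1} has the shape $x_1\dots x_{n^2}\,{\bf J}_{n,m}\,x_{n^2}\dots x_1 \approx x_1\dots x_{n^2}\,\overline{{\bf J}_{n,m}}\,x_{n^2}\dots x_1$, which is precisely the conclusion of Lemma \ref{first} once the role of the ``$n$'' there is played by $n^2$ and ${\bf X}={\bf J}_{n,m}$. I would then check the two hypotheses of Lemma \ref{first} for this reading. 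The condition $\con({\bf X})=\{x_1,\dots,x_{n^2}\}$ holds because each of $x_1,\dots,x_{n^2}$ occurs in ${\bf J}_{n,m}$, and the occurrence requirement is immediate since every variable appears with exponent $m$, hence exactly $m$ times, so $occ_{{\bf J}_{n,m}}(x_i)=m$.

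To finish, writing $\ell=2k$ or $\ell=2k+1$, the assumption $\ell\ge 5$ forces $k\ge 2$ in both cases, so the standing requirement $k\ge 2$ of Lemma \ref{first} is satisfied; it remains only to choose $m$ with $m\ge k-1$, and $m=k-1\ge 1$ is admissible (for $\ell=5$ this is $m=1$, i.e.\ the original Jackson words ${\bf J}_n$). With these choices Lemma \ref{first} yields $L_\ell^1\models {\bf U}_n\approx {\bf V}_n$, hence $M\models {\bf U}_n\approx {\bf V}_n$, and Corollary \ref{L52} completes the argument. Because the statement is advertised as immediate, there is no genuine obstacle here; the only place demanding care is the bookkeeping, namely keeping the Jackson exponent separate from the monoid index $k$ and reindexing Lemma \ref{first} from $n$ to $n^2$ variables, which together make the inequality $m\ge k-1$ and the well-definedness $m\ge 1$ transparent.
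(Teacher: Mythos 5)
Your proposal is correct and follows exactly the paper's route: the paper derives Corollary \ref{L5} by composing Corollary \ref{L52} with Lemma \ref{first}, which is precisely what you do, and your bookkeeping (renaming the Jackson exponent to $m$, applying Lemma \ref{first} with $n^2$ variables and ${\bf X}={\bf J}_{n,m}$, noting $\ell\ge 5$ forces $k\ge 2$ so $m=k-1\ge 1$ works) correctly fills in the details the paper leaves as ``immediate.''
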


\section{Identities of monoids that satisfy Property (C$_\ell$)}

If a semigroup $S$ satisfies an identity  ${\bf u} \approx {\bf v}$ we write  $S \models {\bf u} \approx {\bf v}$.

\begin{fact} \label{xx}  If for some $k \ge 1$, a monoid $M$ satisfies Property (C$_{2k}$)  then
the word $x^k$ is an isoterm for $M$.
\end{fact}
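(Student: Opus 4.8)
The plan is to show that any word $\bf v$ satisfying $M \models x^k \approx {\bf v}$ must coincide with $x^k$. I would split this into two observations: first that $\bf v$ is a power of $x$, and second that the exponent is exactly $k$. Both follow by feeding carefully chosen substitution instances of the identity $x^k \approx {\bf v}$ into Property (C$_{2k}$), the point being that a substitution of an identity of $M$ is again an identity of $M$.

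First I would argue that $\con({\bf v}) = \{x\}$. Apply the substitution that fixes $x$ and sends every other variable to $y$; this turns $x^k \approx {\bf v}$ into an identity $x^k \approx {\bf v}'$ with ${\bf v}' \in \{x,y\}^+$. The word $x^k$ is a single island, so its height is $1 \le 2k$, and Property (C$_{2k}$) forces ${\bf v}'$ to be of the same type as $x^k$, that is, a power of $x$. Hence ${\bf v}'$ contains no occurrence of $y$, which is only possible if $\bf v$ itself contained no variable other than $x$. Thus ${\bf v} = x^m$ for some $m \ge 1$.

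The key step is to pin down the exponent, and this is where I expect the only real subtlety. Property (C$_\ell$) controls only the \emph{type} of a word, and $x^k$ and $x^m$ have the same type for every $m$, so the hypothesis says nothing directly about the identity $x^k \approx x^m$. To break this, I would apply the substitution $x \mapsto xy$ to $x^k \approx x^m$, obtaining $(xy)^k \approx (xy)^m$. Now $(xy)^k$ is a genuinely two-variable word whose islands alternate $x,y,x,y,\dots$, so it has height exactly $2k$, and (C$_{2k}$) applies. A word of the same type as $(xy)^k$ is obtained by changing individual exponents, hence has the same sequence of islands and in particular the same number $2k$ of them; since $(xy)^m$ has $2m$ islands, being of the same type forces $m = k$. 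Therefore ${\bf v} = x^k$, so $x^k$ is an isoterm for $M$.

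The main obstacle is exactly this last point: the island structure is the only feature that Property (C$_\ell$) remembers, so the exponent $k$ must first be re-encoded as a height before it can be recovered. The substitution $x \mapsto xy$ accomplishes precisely this, converting one island of length $k$ into $2k$ unit islands, which also explains why the hypothesis is (C$_{2k}$) rather than merely (C$_k$): the threshold must be large enough to accommodate a word of height $2k$.
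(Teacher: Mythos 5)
Your proof is correct and follows essentially the same route as the paper: the key move in both is the substitution $x \mapsto xy$, which converts the identity $x^k \approx x^m$ into $(xy)^k \approx (xy)^m$ and lets Property (C$_{2k}$) detect the exponent via height. Your first step (ruling out extra variables in ${\bf v}$) is left implicit in the paper's proof, so your write-up is just a slightly more explicit version of the same argument.
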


\begin{proof}  If $M \models x^k \approx x^r$ for some $r \ne k$ then  $M \models (xy)^k \approx (xy)^r$.
To avoid a contradiction to  Property (C$_{2k}$) we conclude that $x^k$ is an isoterm for $M$.
\end{proof}

\begin{fact} \label{xy}  Let $M$ be a monoid that satisfies Property (C$_2$), then

(i) $xy$ is an isoterm for $M$;

(ii)  if $M \models x^+t \approx {\bf v}$ then ${\bf v} =  x^+t$;

(iii)  if $M \models tx^+ \approx {\bf v}$ then ${\bf v} =  tx^+$;

(iv)  if $M \models x^+tx^+ \approx {\bf v}$ then ${\bf v} =  x^+tx^+$.

\end{fact}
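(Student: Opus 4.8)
The plan is to first record two easy consequences of Property (C$_2$) that will be used throughout. By Fact \ref{xx} applied with $k=1$, the one-letter word $x$ is an isoterm for $M$, and by renaming so is every one-letter word (in particular $y$ and $t$). As in the proof of Lemma \ref{L51}, the fact that $x$ is an isoterm forces $M$ to satisfy only regular identities: a non-regular identity would, after substituting all but one variable by $1$, yield $M\models z^j\approx 1$ for some $j\ge 1$, whence $x\approx xz^j$ would violate the isoterm property of $x$. So for any identity $M\models{\bf u}\approx{\bf v}$ we may assume $\con({\bf u})=\con({\bf v})$, and single variables never change under an identity.

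For part (i), I would take $M\models xy\approx{\bf v}$. Regularity gives $\con({\bf v})=\{x,y\}$, and since $xy$ has height $2$, Property (C$_2$) forces ${\bf v}$ to be of the same type, i.e. ${\bf v}=x^ay^b$ with $a,b\ge 1$. Substituting $y\mapsto 1$ gives $M\models x\approx x^a$, so $a=1$ because $x$ is an isoterm; substituting $x\mapsto 1$ gives $b=1$. Hence ${\bf v}=xy$.

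Parts (ii) and (iii) run in the same spirit, now reading $\{x,t\}$ as the two-letter alphabet in (C$_2$). For (ii), from $M\models x^+t\approx{\bf v}$ regularity gives $\con({\bf v})=\{x,t\}$, and as $x^+t$ has height $2$, Property (C$_2$) yields ${\bf v}=x^at^b$ of the same type. Substituting $x\mapsto 1$ produces $M\models t\approx t^b$, so $b=1$ since $t$ is an isoterm, and therefore ${\bf v}=x^+t$. Part (iii) is then immediate by reversal: applying (ii) to the reverse identity $\overline{t x^+}=x^+t\approx\overline{\bf v}$ forces $\overline{\bf v}=x^+t$, i.e. ${\bf v}=tx^+$.

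The main obstacle is part (iv), because $x^+tx^+$ has height $3$ and so lies outside the scope of (C$_2$); the argument must instead bootstrap from the parts already proved. Starting from $M\models x^+tx^+\approx{\bf v}$, regularity gives $\con({\bf v})=\{x,t\}$, and substituting $x\mapsto 1$ together with the isoterm property of $t$ shows $M\models t\approx{\bf v}|_{x\mapsto 1}$, so $t$ occurs exactly once in ${\bf v}$. Consequently ${\bf v}=x^atx^c$ for some $a,c\ge 0$ with $a+c\ge 1$, and it only remains to exclude the degenerate endpoints. If $c=0$ then $a\ge 1$ and ${\bf v}=x^+t$, so the identity $M\models x^+t\approx x^+tx^+$ contradicts part (ii), since the right-hand side $x^+tx^+$ (with positive trailing exponent) is not of the form $x^+t$; symmetrically, if $a=0$ then ${\bf v}=tx^+$ and the identity contradicts part (iii). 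Hence $a,c\ge 1$ and ${\bf v}=x^+tx^+$, which is exactly the required conclusion.
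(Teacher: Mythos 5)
Your proof follows the same route as the paper's: Fact \ref{xx} gives that single variables are isoterms, Property (C$_2$) pins down the type of the other side of the identity, the isoterm property of the linear variable $t$ pins its exponent to $1$, and (iv) is bootstrapped from (ii)--(iii) by excluding the degenerate endpoints $a=0$ and $c=0$. Parts (i), (ii) and (iv) are correct and simply supply the details that the paper compresses into ``follows immediately.''

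The justification of (iii), however, is not sound as written. From $M \models tx^+ \approx {\bf v}$ you pass to ``the reverse identity'' $x^+t \approx \overline{{\bf v}}$ and apply (ii) to it. But (ii) is a statement about identities of $M$, and the reversal of an identity of $M$ is in general an identity only of the opposite monoid, not of $M$ itself: identities are not closed under reversal (for instance, left-zero semigroups satisfy $xy \approx x$ but not $yx \approx x$), and nothing established so far tells you that $M \models x^+t \approx \overline{{\bf v}}$. The gap is easy to close in either of two ways. One is to argue (iii) directly and symmetrically to (ii), which is what the paper intends: $tx^+$ has height $2$, so Property (C$_2$) forces ${\bf v} = t^bx^a$ with $a,b \ge 1$, and substituting $x \mapsto 1$ together with the isoterm property of $t$ gives $b = 1$. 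The other is to observe that height and type are invariant under reversal, so the opposite monoid of $M$ also satisfies Property (C$_2$); then (ii) applied to that monoid legitimately yields (iii). With either repair your argument is complete and coincides with the paper's.
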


\begin{proof} (i) Since the word $x$ is an  isoterm for $M$ by Fact \ref{xx} and $xy$ can form an identity of $M$ only with a word of the same type, the word $xy$ is also an  isoterm for $M$.

Parts (ii) and (iii) follow immediately from the fact that $t$ is an isoterm for $M$ and Property (C$_2$).
Part (iv) follows immediately from  the fact that $t$ is an isoterm for $M$ and Parts (ii)--(iii).\end{proof}

\begin{fact} \label{xtx}  Let $M$ be a monoid that satisfies Property (C$_3$). If $M \models x^+t_1x^+t_2x^+ \approx {\bf v}$ then ${\bf v} =  x^+t_1x^+t_2x^+$.
\end{fact}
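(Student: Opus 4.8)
The plan is to first pin down the content of ${\bf v}$ and then reconstruct its shape block by block, using suitable substitutions together with the facts already established. Writing ${\bf u} = x^{n_1}t_1x^{n_2}t_2x^{n_3}$ with $n_1,n_2,n_3 \ge 1$, I note that Property (C$_3$) implies Property (C$_2$) (a word of height at most $2$ has height at most $3$), so by Fact \ref{xx} the word $x$ is an isoterm for $M$ and hence $M$ satisfies only regular identities; this gives $\con({\bf v}) = \{x,t_1,t_2\}$. It then suffices to show that ${\bf v} = x^{a}t_1x^{b}t_2x^{c}$ for some $a,b,c \ge 1$.

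Most of the shape is routine. Applying the substitution that sends $t_2 \mapsto x$ and fixes $x,t_1$ turns the identity into $M \models x^+t_1x^+ \approx \sigma({\bf v})$, so Fact \ref{xy}(iv) forces $\sigma({\bf v}) = x^+t_1x^+$; since this substitution fixes $t_1$, the variable $t_1$ occurs exactly once in ${\bf v}$, and symmetrically (sending $t_1 \mapsto x$) so does $t_2$. Thus ${\bf v} = w_0\, t_{i_1}\, w_1\, t_{i_2}\, w_2$ with each $w_j$ a power of $x$ and $\{i_1,i_2\} = \{1,2\}$. Deleting $x$, which is legitimate as $M$ is a monoid, yields $M \models t_1t_2 \approx t_{i_1}t_{i_2}$; because $t_1t_2$ is an isoterm by Fact \ref{xy}(i), we obtain $i_1=1$ and $i_2=2$, so ${\bf v} = x^{a}t_1x^{b}t_2x^{c}$. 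Re-reading the two merging substitutions with the order now known, they send ${\bf v}$ to $x^{a}t_1x^{b+c+1}$ and to $x^{a+b+1}t_2x^{c}$ respectively; comparing these with $x^+t_1x^+$ and $x^+t_2x^+$ forces $a \ge 1$ and $c \ge 1$.

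The one delicate point, and the step I expect to be the main obstacle, is proving that the middle block is nonempty, i.e.\ $b \ge 1$: the merges used above cannot detect this, since they produce words of the same type whether or not $b$ vanishes. The device I would use is the substitution $\phi$ that fixes $x$ and sends both $t_1$ and $t_2$ to a single new variable $y$. Then $\phi({\bf u}) = x^{n_1}yx^{n_2}yx^{n_3}$ has height $5$, while $\phi({\bf v}) = x^{a}yx^{b}yx^{c}$. If $b = 0$ the latter collapses to $x^{a}y^{2}x^{c}$, a word of height $3$; Property (C$_3$) then demands that $\phi({\bf u})$ be of the same type as $x^{a}y^{2}x^{c}$, which is impossible since the two words have different heights. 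Hence $b \ge 1$, and therefore ${\bf v} = x^+t_1x^+t_2x^+$, as required.
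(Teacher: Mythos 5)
Your proof is correct and takes essentially the same route as the paper's: the paper compresses your steps 3--6 (showing ${\bf v}$ must be $x^+t_1x^+t_2x^+$ or $x^+t_1t_2x^+$) into a citation of Fact \ref{xy}(i),(iv), and then eliminates the residual case $b=0$ with exactly your substitution sending $t_1,t_2$ to a common variable $y$, which contradicts Property (C$_3$) since $x^+yx^+yx^+$ and $x^+y^+x^+$ have different heights.
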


\begin{proof}  If  ${\bf v} \ne  x^+t_1x^+t_2x^+$, then view of Fact \ref{xy}(i),(iv), ${\bf v} = x^+t_1 t_2 x^+$.
If we substitute $y$ for $t_1$ and $t_2$ then we obtain  $M \models x^+yx^+yx^+ \approx x^+yyx^+$.
This contradicts the fact that $M$  satisfies Property (C$_3$).
\end{proof}

 If a variable $t$ occurs exactly once in a word ${\bf u}$ then we say that $t$ is {\em linear} in ${\bf u}$. If a variable $x$ occurs more than once in  ${\bf u}$ then we say that $x$ is {\em non-linear} in ${\bf u}$.  Evidently, $\con({\bf u}) = \lin({\bf u}) \cup \non({\bf u})$ where $\lin({\bf u})$ is the set of all linear variables in $\bf u$ and $\non({\bf u})$ is the set of all non-linear variables in $\bf u$.
A {\em block} of $\bf u$ is a maximal subword of $\bf u$ that does not contain any linear variables of $\bf u$.

\begin{lemma} \label{basic} Let $M$ be a monoid that satisfies Property (C$_3$).
If $M \models {\bf u} \approx {\bf v}$ then

(i) $\lin({\bf u}) = \lin({\bf v})$,  $\non({\bf u}) = \non({\bf v})$  and the order of occurrences of linear variables in $\bf v$  is the same as in $\bf u$;

(ii) the corresponding blocks of $\bf u$ and $\bf v$ have the same content. In other words, if
\[{\bf u} = {\bf a}_0t_1 {\bf a}_1t_2 \dots t_{m-1}{\bf a}_{m-1} t_m {\bf a}_m,\] where $\non({\bf u}) = \con({\bf a}_0 {\bf a}_1 \dots {\bf a}_{m-1}{\bf a}_m)$ and  $\lin({\bf u}) = \{ t_1, \dots, t_m\}$, then
\[{\bf v} = {\bf b}_0t_1 {\bf b}_1t_2 \dots t_{m-1}{\bf  b}_{m-1} t_m {\bf b}_m\]
such that $\con({\bf a}_q) = \con({\bf b}_q)$ for each $0 \le q \le m$.

\end{lemma}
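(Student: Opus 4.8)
The plan is to first record a few preliminaries, then establish (i), and finally (ii), whose combinatorics will be the main work. Since a word of height at most $2$ also has height at most $3$, Property (C$_3$) implies Property (C$_2$); hence by Fact \ref{xx} (with $k=1$) the word $x$ is an isoterm for $M$. From this I would deduce that $M$ satisfies only regular identities: if some $z \in \con({\bf u}) \setminus \con({\bf v})$, substitute $z \mapsto x$ and every other variable $\mapsto 1$ to obtain $M \models x^m \approx 1$ with $m = occ_{\bf u}(z) \ge 1$, whence multiplying by $x$ gives $M \models x \approx x^{m+1}$, contradicting that $x$ is an isoterm. Thus $\con({\bf u}) = \con({\bf v})$.

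For (i), to see that linearity is preserved I would fix $z \in \lin({\bf u})$ and apply the substitution sending $z \mapsto t$ and every other variable to a single letter $x$. Then $\Theta({\bf u})$ is one of $t$, $x^+t$, $tx^+$, $x^+tx^+$ (according to the position of the unique occurrence of $z$), so either $x$ is an isoterm or one of Fact \ref{xy}(ii)--(iv) applies; in every case $\Theta({\bf v}) = \Theta({\bf u})$, which contains the letter $t$ exactly once, forcing $z \in \lin({\bf v})$. By symmetry $\lin({\bf u}) = \lin({\bf v})$, and combined with regularity this gives $\non({\bf u}) = \non({\bf v})$. To preserve the order of the linear variables I would take $t_i,t_j \in \lin({\bf u})$ with $t_i$ preceding $t_j$ and apply the substitution $t_i \mapsto x$, $t_j \mapsto y$, all other variables $\mapsto 1$. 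Then $\Theta({\bf u}) = xy$, while $\Theta({\bf v}) \in \{xy, yx\}$ since $t_i,t_j$ are linear in ${\bf v}$; as $xy$ is an isoterm by Fact \ref{xy}(i), we get $\Theta({\bf v}) = xy$, so $t_i$ precedes $t_j$ in ${\bf v}$ as well.

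By (i) we may write ${\bf u} = {\bf a}_0 t_1 {\bf a}_1 \cdots t_m {\bf a}_m$ and ${\bf v} = {\bf b}_0 t_1 {\bf b}_1 \cdots t_m {\bf b}_m$ with the same linear variables in the same positions and with $\con({\bf a}_q),\con({\bf b}_q) \subseteq \non({\bf u}) = \non({\bf v})$. For (ii) it suffices to show, for every non-linear variable $y$ and every index $q$, that $y \in \con({\bf a}_q)$ implies $y \in \con({\bf b}_q)$ (the converse following by symmetry). The crucial substitution keeps $y$ together with the two linear variables bounding block $q$ and deletes everything else: for an interior block ($1 \le q \le m-1$) send $y \mapsto y$, $t_q \mapsto t$, $t_{q+1} \mapsto s$, and every other variable $\mapsto 1$. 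This yields $M \models y^p t y^r s y^w \approx y^{p'} t y^{r'} s y^{w'}$, where $r = occ_{{\bf a}_q}(y)$, $r' = occ_{{\bf b}_q}(y)$, and the outer exponents collect the occurrences of $y$ in all the remaining blocks. Deleting the far linear variables lumps together the outer occurrences of $y$ but cannot move any occurrence across $t_q$ or $t_{q+1}$; this is exactly why both bounding letters must be retained.

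Suppose now $r \ge 1$ but $r' = 0$, so that $t$ and $s$ are adjacent on the right-hand side. Renaming $y$ to $x$ and, if necessary, prepending and/or appending one further $x$ to both sides (an operation that preserves identities of $M$, and which is available because the middle exponent $r$ is already positive), I can arrange the left-hand side to have the form $x^+ t x^+ s x^+$ while $t$ and $s$ stay adjacent on the right. By Fact \ref{xtx} the left-hand side is rigid, so the right-hand side would have to contain a positive power of $x$ between $t$ and $s$, contradicting their adjacency. The boundary cases $q=0$ and $q=m$ are handled identically, except that only one bounding linear variable is present, so the relevant words have height at most $3$ and the contradiction comes from Fact \ref{xy}(ii)--(iv) instead of Fact \ref{xtx}. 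Hence $r' \ge 1$, i.e. $y \in \con({\bf b}_q)$, completing (ii). I expect the main obstacle to be precisely the bookkeeping in this last step: one must retain both bounding linear variables so that a discrepancy in the content of block $q$ survives as a change in the adjacency of $t$ and $s$, while simultaneously neutralizing the uncontrolled outer occurrences of $y$, and then absorb all nonzero boundary exponents into the hypothesis of Fact \ref{xtx} via the prepend/append device.
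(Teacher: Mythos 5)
Your proof is correct and takes essentially the same route as the paper's: part (i) from the isoterm facts (Fact \ref{xx}, Fact \ref{xy}), and part (ii) by reducing to a single non-linear variable flanked by the bounding linear variables, with Fact \ref{xtx} yielding the contradiction for interior blocks and Fact \ref{xy}(ii)--(iv) for the boundary blocks. You merely spell out steps the paper leaves implicit, notably the prepend/append padding needed to bring the words into the exact form $x^+t_1x^+t_2x^+$ required by Fact \ref{xtx}.
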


\begin{proof} Part (i) is an immediate consequence from Fact \ref{xy} (i).

 In order  to verify  Part (ii), it is enough to assume that ${\bf u}$ contains exactly one non-linear variable $x$.
In this case, in view of Fact \ref{xy}(ii)--(iv), $\bf u$ and $\bf v$ begin
and end with the same variables. If some corresponding blocks of $\bf u$ and $\bf v$ did not have the same content then for some $1 \le q \le m$, $\bf u$ would contained  $t_qt_{q+1}$ as a subword but $\bf v$ contained $t_q x^+ t_{q+1}$ as a subword or vice versa. Since this  contradicts Fact \ref{xtx}, the corresponding blocks of $\bf u$ and $\bf v$ have the same content.
\end{proof}

If $\con({\bf u}) \supseteq \{x_1, \dots, x_n\}$ we write ${\bf u}(x_1, \dots, x_n)$ to refer to the word obtained from ${\bf u}$ by deleting all occurrences of all variables that are not in $\{x_1, \dots, x_n\}$.

\begin{lemma} \label{begend}  Let $\ell >2$ and $M$ be a monoid that satisfies Property (C$_\ell$).
Let  $\bf u$ be a word with $\non({\bf u}) = \{x, y \}$  such that the height of ${\bf u}(x, y)$ is at most $\ell$.

If $M \models {\bf u} \approx {\bf v}$ then
the corresponding blocks of $\bf u$ and $\bf v$ begin and end with the same variables.
\end{lemma}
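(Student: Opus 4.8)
The plan is to reduce the statement to a single block boundary and then probe that boundary by substituting the separating linear variable by a short word over $\{x,y\}$, using Property (C$_\ell$) as a filter on heights and types. Write $\mathbf{u} = \mathbf{a}_0t_1\mathbf{a}_1\cdots t_m\mathbf{a}_m$ and $\mathbf{v} = \mathbf{b}_0t_1\mathbf{b}_1\cdots t_m\mathbf{b}_m$ as in Lemma \ref{basic}, so that $\con(\mathbf{a}_q)=\con(\mathbf{b}_q)$ for every $q$. Substituting $t_i\mapsto 1$ for \emph{all} $i$ gives $M\models\mathbf{u}(x,y)\approx\mathbf{v}(x,y)$, and since the height of $\mathbf{u}(x,y)$ is at most $\ell$, Property (C$_\ell$) forces $\mathbf{v}(x,y)$ to be of the same type as $\mathbf{u}(x,y)$. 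Fixing a single guard $t_q$ and sending every other linear variable to $1$ yields $M\models\mathbf{P}t\mathbf{Q}\approx\mathbf{P}'t\mathbf{Q}'$ with $\mathbf{P},\mathbf{Q},\mathbf{P}',\mathbf{Q}'\in\{x,y\}^*$, where $\mathbf{P}\mathbf{Q}=\mathbf{u}(x,y)$, $\mathbf{P}'\mathbf{Q}'=\mathbf{v}(x,y)$, the last letter of $\mathbf{P}$ is the last letter of $\mathbf{a}_{q-1}$, and the first letter of $\mathbf{Q}$ is the first letter of $\mathbf{a}_q$. Letting $q$ range over $1,\dots,m$ (and reading the outermost letters of $\mathbf{a}_0$ and $\mathbf{a}_m$ off the global same-type conclusion) recovers the first and last letter of every block, so it suffices to prove the one-guard statement: the last letter of $\mathbf{P}$ equals that of $\mathbf{P}'$ and the first letter of $\mathbf{Q}$ equals that of $\mathbf{Q}'$. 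Blocks of single-letter content are settled at once by equality of content, so I may assume the relevant blocks contain both $x$ and $y$; denote by $c,d$ the last letter of $\mathbf{P}$ and first letter of $\mathbf{Q}$, and by $c',d'$ the analogous letters for $\mathbf{P}',\mathbf{Q}'$, all lying in $\{x,y\}$.

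The main tool is that for any word $s\in\{x,y\}^*$ the substitution $t\mapsto s$ (fixing $x,y$) produces $M\models\mathbf{P}s\mathbf{Q}\approx\mathbf{P}'s\mathbf{Q}'$, and whenever one of the two sides has height at most $\ell$, Property (C$_\ell$) forces the other side to have the \emph{same} type, in particular the same height. I would choose $s$ so that on the $\mathbf{u}$-side the insertion merges into the adjacent islands and leaves the height equal to that of $\mathbf{P}\mathbf{Q}\le\ell$. The natural choice is $s=cd$: since $c$ is the last letter of $\mathbf{P}$ and $d$ the first letter of $\mathbf{Q}$, inserting $cd$ creates no new island, so the height of $\mathbf{P}(cd)\mathbf{Q}$ equals that of $\mathbf{P}\mathbf{Q}$, and (C$_\ell$) forces $\mathbf{P}'(cd)\mathbf{Q}'$ to be of the same type as $\mathbf{P}'\mathbf{Q}'$. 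A short island-count then shows that inserting $cd$ preserves the type of $\mathbf{P}'\mathbf{Q}'$ exactly when $c'=c$ and $d'=d$: comparing the heights of $\mathbf{P}'(cd)\mathbf{Q}'$ and $\mathbf{P}'\mathbf{Q}'$ gives the relation $[c'=c]+[d'=d]-[c'=d']=2$, whose only solution is $c'=c,\ d'=d$. This disposes of the case $c\ne d$ completely.

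The remaining, and genuinely delicate, case is $c=d=:w$, i.e. the guard sits inside a single $w$-island of $\mathbf{u}(x,y)$. Here $s=w^2$ only yields the weaker relation $[c'=w]+[d'=w]-[c'=d']=1$, leaving the three possibilities $(c',d')\in\{(w,w),(w,\bar w),(\bar w,w)\}$, where $\bar w$ is the other letter. To eliminate the two unwanted ones I would substitute $t\mapsto\bar w$. On the $\mathbf{u}$-side this inserts the opposite letter into the middle of a $w$-island, splitting it and raising the height by exactly $2$; but in each of the two bad configurations the letter $\bar w$ merges into an adjacent island of $\mathbf{P}'\mathbf{Q}'$, so the $\mathbf{v}$-side keeps height equal to that of $\mathbf{v}(x,y)=\mathbf{u}(x,y)\le\ell$. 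Applying Property (C$_\ell$) to the $\mathbf{v}$-side then forces the $\mathbf{u}$-side to have the same height, contradicting the increase by $2$; only $(c',d')=(w,w)$ survives. The main obstacle is precisely this inside-island case: any orientation-sensitive probe must insert the opposite letter and therefore risks pushing the word past height $\ell$, which is why the argument must be arranged to apply (C$_\ell$) to whichever side stays within the bound rather than to a fixed side. Once the one-guard statement is established in both cases, letting $q$ vary completes the proof that corresponding blocks of $\mathbf{u}$ and $\mathbf{v}$ begin and end with the same variables.
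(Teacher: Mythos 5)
Your proof is correct and follows essentially the same strategy as the paper's: decompose $\mathbf{u}$ and $\mathbf{v}$ via Lemma \ref{basic}, apply (C$_\ell$) after deleting the linear variables, then probe each linear variable $t$ with a substitution $t\mapsto s$, $s\in\{x,y\}^+$, chosen to merge into adjacent islands on one side while forcing an island split (height jump) on the other, contradicting (C$_\ell$). The paper packages the same idea as a claim about $3$-letter subwords $c_1tc_2$ settled in three cases, whose probes $t\mapsto yx$ and $t\mapsto y$ correspond to your $t\mapsto cd$ and $t\mapsto w^2$, $t \mapsto \bar w$ probes; your explicit observation that in the delicate case (C$_\ell$) must be applied to the $\mathbf{v}$-side, the side certified to stay within height $\ell$, is handled in the paper implicitly through the symmetry of the identity in its ``modulo renaming variables and duality'' reduction.
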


\begin{proof}  By Lemma \ref{basic}, we have \[{\bf u} = {\bf a}_0t_1 {\bf a}_1t_2 \dots t_{m-1}{\bf a}_{m-1} t_m {\bf a}_m\]
and \[{\bf v} = {\bf b}_0t_1 {\bf b}_1t_2 \dots t_{m-1}{\bf  b}_{m-1} t_m {\bf b}_m,\]
such that   for each $0 \le q \le m$, $\con({\bf a}_q) = \con({\bf b}_q) \subseteq \{x,y\}$.

Since the height of ${\bf u}(x, y)$ is at most $\ell$, Property  (C$_\ell$)  implies that   ${\bf u}$ and $\bf v$ begin and end with the same variables.  The rest follows from the following.

\begin{claim}  ${\bf u}$ contains a subword $c_1tc_2$ for some $c_1, c_2 \in \{x,y\}$ and $t \in \{t_1, \dots, t_m\}$ if and only if
${\bf v}$ contains the identical 3-letter subword.
\end{claim}

\begin{proof} To obtain a contradiction, suppose that ${\bf u}(x,y,t)$ and ${\bf v}(x,y,t)$ have different 3-letter subwords with $t$ in the middle. Modulo renaming variables and duality there are three cases.

{\bf Case 1}: ${\bf u}$ contains $ytx$ as a subword but $\bf v$ contains $xtx$ as a subword.

In this case, let  $\Theta: \mathfrak A \rightarrow \mathfrak A^+$  be a substitution such that $\Theta(t) =yx$ and is identical on all other variables.
Then $\Theta({\bf u}(x,y,t))$ has the same type as ${\bf u}(x,y)$
 but $\Theta({\bf v}(x,y,t))$ has bigger height than  ${\bf u}(x,y)$. This contradicts Property (C$_\ell$).

{\bf Case 2}: ${\bf u}$ contains $ytx$ as a subword but $\bf v$ contains $xty$ as a subword.

In this case, let  $\Theta: \mathfrak A \rightarrow \mathfrak A^+$  be a substitution such that $\Theta(t) =yx$ and is identical on all other variables.
Then $\Theta({\bf u}(x,y,t))$ has the same type as ${\bf u}(x,y)$
 but $\Theta({\bf v}(x,y,t))$ has bigger height than  ${\bf u}(x,y)$. This contradicts Property  (C$_\ell$).

{\bf Case 3}: ${\bf u}$ contains $yty$ as a subword but $\bf v$ contains $xtx$ as a subword.

In this case, let  $\Theta: \mathfrak A \rightarrow \mathfrak A^+$  be a substitution such that $\Theta(t) =y$ and is identical on all other variables.
Then $\Theta({\bf u}(x,y,t))$ has the same type as ${\bf u}(x,y)$
 but $\Theta({\bf v}(x,y,t))$ has bigger height than  ${\bf u}(x,y)$. This contradicts Property  (C$_\ell$).
\end{proof}
\end{proof}

\begin{lemma} \label{2let} Let  $\ell >2$ and $M$ be a monoid that satisfies Property (C$_\ell$).
Let ${\bf u}$  be a word with  $\non({\bf u}) =  \{x,y\}$ such that

(i)  the height of ${\bf u}(x,y)$ is at most $\ell$;

(ii) every block of $\bf u$  has  height  at most $3$.

Then $\bf u$  can form an identity of $M$ only with a word of the same type.
\end{lemma}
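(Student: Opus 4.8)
The plan is to show that if $M \models {\bf u} \approx {\bf v}$ then ${\bf v}$ has the same type as ${\bf u}$, by reducing everything to the island structure of $x$ and $y$ block by block. Since $\ell > 2$, the monoid $M$ satisfies Property (C$_3$), so Lemma \ref{basic} applies and gives $\lin({\bf u}) = \lin({\bf v})$, $\non({\bf u}) = \non({\bf v}) = \{x,y\}$, the same order of occurrence of the linear variables, and equal content of corresponding blocks. Writing ${\bf u} = {\bf a}_0 t_1 {\bf a}_1 \cdots t_m {\bf a}_m$ and ${\bf v} = {\bf b}_0 t_1 {\bf b}_1 \cdots t_m {\bf b}_m$ with $\con({\bf a}_q) = \con({\bf b}_q) \subseteq \{x,y\}$, I would then invoke Lemma \ref{begend} (which uses hypothesis (i)) to conclude that ${\bf a}_q$ and ${\bf b}_q$ begin and end with the same variables. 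Thus, before any exponents are examined, the full linear skeleton, the content of each block, and the first and last letter of each block already coincide in ${\bf u}$ and ${\bf v}$.

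The heart of the argument is to promote this boundary information into a statement about the heights of individual blocks. First I would substitute the identity element for every linear variable $t_i$; since $M$ is a monoid, this yields $M \models {\bf u}(x,y) \approx {\bf v}(x,y)$, and because the height of ${\bf u}(x,y)$ is at most $\ell$ by (i), Property (C$_\ell$) forces ${\bf v}(x,y)$ to have the same type, hence the same height, as ${\bf u}(x,y)$. Now ${\bf u}(x,y)$ is the concatenation ${\bf a}_0 {\bf a}_1 \cdots {\bf a}_m$, and the height of a concatenation of words over $\{x,y\}$ equals the sum of the heights of the factors minus the number of junctions at which the last letter of one nonempty factor equals the first letter of the next. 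Since corresponding blocks have equal content (so they are simultaneously empty) and equal first and last letters, the junction pattern is identical in ${\bf u}(x,y)$ and ${\bf v}(x,y)$; subtracting the same number from equal total heights then gives $\sum_q h_q = \sum_q h'_q$, where $h_q$ and $h'_q$ denote the heights of ${\bf a}_q$ and ${\bf b}_q$.

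It remains to localize this equality to each block, and this is exactly where hypothesis (ii) enters. The key observation is that over $\{x,y\}$ the content together with the first and last letters determines the \emph{minimum} possible height of a word: this minimum is $1$ for single-letter content, $2$ when the content is $\{x,y\}$ and the first and last letters differ, and $3$ when the content is $\{x,y\}$ and they agree. Condition (ii) says $h_q \le 3$, which forces ${\bf a}_q$ to realize precisely this minimum in each case; since ${\bf b}_q$ shares the same content and the same first and last letters, it satisfies $h'_q \ge h_q$. Combined with $\sum_q h'_q = \sum_q h_q$, this yields $h'_q = h_q$ for every $q$. Finally, over a two-letter alphabet a word is determined up to type by its content, its first letter, and its height, so each ${\bf b}_q$ has the same type as ${\bf a}_q$; as the linear variables sit in identical positions, ${\bf v}$ has the same type as ${\bf u}$. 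I expect the localization step to be the main obstacle: Property (C$_\ell$) controls only the height of the whole projection ${\bf u}(x,y)$, and the role of hypothesis (ii) is precisely to convert this single global constraint into the per-block inequalities $h'_q \ge h_q$ whose sum is tight.
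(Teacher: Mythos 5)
Your proof is correct and follows essentially the same route as the paper: the same decomposition via Lemmas \ref{basic} and \ref{begend}, followed by playing hypothesis (ii) against Property (C$_\ell$) applied to the two-variable projection, so that any type-mismatch in a block would force a strict height increase in ${\bf v}(x,y)$. The only difference is presentational: where the paper enumerates the possible block shapes $\{x^+, y^+, x^+y^+, y^+x^+, x^+y^+x^+, y^+x^+y^+\}$ and derives a contradiction, you argue quantitatively via minimum heights and junction counting, which in fact makes explicit an accounting step (that equal block contents and boundary letters give identical island-merging at junctions) that the paper leaves implicit.
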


\begin{proof}  We have \[{\bf u} = {\bf a}_0t_1 {\bf a}_1t_2 \dots t_{m-1}{\bf a}_{m-1} t_m {\bf a}_m,\] where $\{x,y\} = \con({\bf a}_0 {\bf a}_1 \dots {\bf a}_{m-1}{\bf a}_m)$ and  $\lin({\bf u}) = \{ t_1, \dots, t_m\}$.

If $M \models {\bf u} \approx {\bf v}$ then by  Lemmas \ref{basic} and  \ref{begend} we have
\[{\bf v} = {\bf b}_0t_1 {\bf b}_1t_2 \dots t_{m-1}{\bf  b}_{m-1} t_m {\bf b}_m\]
such that  for each $0 \le q \le m$, $\con({\bf a}_q) = \con({\bf b}_q) \subseteq \{x,y\}$ and the corresponding blocks ${\bf a}_q$ and ${\bf b}_q$
begin and end with the same variable.

 Condition (ii) implies that for each $0 \le q \le m$,  the block ${\bf a}_q$ is either empty or ${\bf a}_q \in \{x^+, y^+, x^+y^+, y^+x^+, x^+y^+x^+, y^+x^+y^+\}$.
Thus, if for some  $0 \le q \le m$, the corresponding blocks   ${\bf a}_q$ and   ${\bf b}_q$ are not of the same type, then only the following two cases are possible  modulo renaming variables:

{\bf Case 1:}  ${\bf a}_q = x^+y^+$ but  ${\bf b}_q = (x^+y^+)^r$ for some $r>1$;

{\bf Case 2:}  ${\bf a}_q = x^+y^+x^+$ but  ${\bf b}_q = (x^+y^+x^+)^r$ for some $r>1$.

So, if $\bf u$ and $\bf v$ are not of the same type, then some blocks of $\bf v$ have bigger height than the corresponding blocks in $\bf u$.  Therefore, ${\bf v}(x,y)$ has bigger height than  ${\bf u}(x,y)$. To avoid a contradiction to  Property (C$_\ell$), we conclude that $\bf u$ and $\bf v$ are of the same type.\end{proof}

\begin{lemma} \label{correct}  Let  $\ell >2$ and $M$ be a monoid that satisfies Property (C$_\ell$).
Let  $\bf u$ be a word such that

(i) for each  $\{x,y\}\subseteq  \con({\bf u})$ the height of   ${\bf u}(x,y)$ is at most $\ell$;

(ii) for each $x \in \non({\bf u})$  there is a linear variable $t \in  \lin({\bf u})$  between any two islands formed by $x$.

Then $\bf u$  can form an identity of $M$ only with a word of the same type.
\end{lemma}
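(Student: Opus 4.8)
The plan is to reduce this multi-variable statement to the two-variable Lemma~\ref{2let}, one pair of non-linear variables at a time, and then to reassemble the pairwise conclusions using the block structure supplied by Lemma~\ref{basic}. Throughout I assume $M \models \mathbf{u} \approx \mathbf{v}$ and write, as in Lemma~\ref{basic}, $\mathbf{u} = \mathbf{a}_0 t_1 \mathbf{a}_1 \cdots t_m \mathbf{a}_m$ and $\mathbf{v} = \mathbf{b}_0 t_1 \mathbf{b}_1 \cdots t_m \mathbf{b}_m$ with $\con(\mathbf{a}_q) = \con(\mathbf{b}_q)$ for each $q$; call this common content $C_q$. The crucial consequence of hypothesis (ii) is that each non-linear variable occupies \emph{at most one} island inside each block $\mathbf{a}_q$, since two islands of the same variable within one block would have no linear variable between them. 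Hence, up to exponents, $\mathbf{a}_q$ lists every letter of $C_q$ exactly once.

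First I would fix two distinct non-linear variables $x,y$ and pass to the word $\mathbf{u}(x,y,t_1,\dots,t_m)$ obtained by deleting every non-linear variable other than $x$ and $y$ while retaining all the linear variables $t_1,\dots,t_m$. Because $M$ is a monoid, substituting the identity element for the deleted variables shows $M \models \mathbf{u}(x,y,t_1,\dots,t_m) \approx \mathbf{v}(x,y,t_1,\dots,t_m)$. The restricted word has $x$ and $y$ as its only non-linear variables, its projection onto $\{x,y\}$ is exactly $\mathbf{u}(x,y)$ and so has height at most $\ell$ by (i), and each of its blocks is $\mathbf{a}_q(x,y)$, which has height at most $2$ by the observation above. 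Thus Lemma~\ref{2let} applies and yields that $\mathbf{v}(x,y,t_1,\dots,t_m)$ is of the same type as $\mathbf{u}(x,y,t_1,\dots,t_m)$. Since the linear variables occupy identical positions, comparing the factors lying strictly between consecutive $t$'s gives that, for every $q$, the words $\mathbf{a}_q(x,y)$ and $\mathbf{b}_q(x,y)$ are of the same type.

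The reassembly is where the real content lies. Fixing a block $q$, I would first show that every letter of $C_q$ occupies a single island in $\mathbf{b}_q$ as well. If some $x \in C_q$ occupied two islands in $\mathbf{b}_q$, then the nonempty factor of $\mathbf{b}_q$ separating two consecutive $x$-islands would contain some other variable $z \in C_q$, forcing $\mathbf{b}_q(x,z)$ to have skeleton $x^+z^+x^+$ and hence two $x$-islands; this contradicts that $\mathbf{b}_q(x,z)$ is of the same type as $\mathbf{a}_q(x,z)$, which has a single $x$-island. The degenerate cases $|C_q|\le 1$ are immediate from $\con(\mathbf{a}_q) = \con(\mathbf{b}_q)$. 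Once both $\mathbf{a}_q$ and $\mathbf{b}_q$ are known to list each letter of $C_q$ exactly once, each is determined (as a type) by a linear order on $C_q$, and the pairwise conclusion that $\mathbf{a}_q(x,y)$ and $\mathbf{b}_q(x,y)$ are of the same type says these two orders agree on every pair. A linear order being determined by its restrictions to pairs, the two orders coincide, so $\mathbf{a}_q$ and $\mathbf{b}_q$ are of the same type. As this holds for all $q$ and the linear variables sit in the same places, $\mathbf{u}$ and $\mathbf{v}$ are of the same type.

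I expect the main obstacle to be the recombination in the last paragraph rather than the reduction itself. One must be careful that hypothesis (ii) is used twice: first to guarantee a single island per block in $\mathbf{u}$ (so that Lemma~\ref{2let} applies with block height at most $2$), and then to force the same for $\mathbf{b}_q$ via the $x^+z^+x^+$ argument. The transfer of the pairwise same-type conclusions block-by-block relies essentially on the linear variables being preserved in fixed positions, so that the inter-block factors can be compared independently. The small-content cases and the routine bookkeeping that deleting non-linear variables merges no islands (again by (ii)) and leaves the set of linear variables intact are the only remaining points to verify.
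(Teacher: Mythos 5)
Your proof is correct and follows essentially the same route as the paper's: use condition (ii) to get at most one island per variable per block, apply Lemma~\ref{2let} to the restricted words ${\bf u}(x_i,x_j,t_1,\dots,t_m)$, and recombine. The only difference is that the paper asserts the final recombination in a single sentence, whereas you spell it out (single-island property of the blocks of $\bf v$, and a linear order being determined by its restrictions to pairs) --- a worthwhile elaboration of a step the paper leaves implicit, but not a different approach.
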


\begin{proof} We have \[{\bf u} = {\bf a}_0t_1 {\bf a}_1t_2 \dots t_{m-1}{\bf a}_{m-1} t_m {\bf a}_m\] where  $\lin({\bf u}) = \{ t_1, \dots, t_m\}$ and
$\con({\bf a}_0 {\bf a}_1 \dots {\bf a}_{m-1}{\bf a}_m) =  \{x_1, \dots,x_n\} = \non({\bf u})$.

 Condition (ii) implies that for each  $1 \le i \le n$ and each $0 \le q \le m$,  each variable $x_i$ forms at most one island in ${\bf a}_q$.
Lemma \ref{2let} implies that for each $1 \le i < j \le n$, ${\bf u}(x_i, x_j, t_1, \dots,t_m)$ forms an identity of $M$ only with a word of the same type.  Therefore,  $\bf u$  can also form an identity of $M$ only with a word of the same type.
\end{proof}

\section{Words and substitutions}

\begin{lemma} \label{redef}  Let $\bf u$ and $\bf v$ be two words of the same type such that $\lin({\bf u}) = \lin({\bf v})$ and
$\non({\bf u}) = \non({\bf v})$.

Let  $\Theta: \mathfrak A \rightarrow \mathfrak A^+$
be a substitution that has the following property:

(*) If $\Theta(x)$ contains more than one variable then $x$ is  linear  in $\bf u$.

Then $\Theta({\bf u})$ and  $\Theta({\bf v})$  are also of the same type.

\end{lemma}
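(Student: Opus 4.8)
The plan is to reduce everything to island structure. Two words are of the same type precisely when they have the same sequence of island-variables, i.e.\ the same word obtained by replacing each island $x^+$ with a single $x$; call this common sequence the \emph{skeleton} of the word. Since $\bf u$ and $\bf v$ are of the same type, I would record their common skeleton $c_1 c_2 \cdots c_N$ (so consecutive $c_i$ are distinct) and write
\[
{\bf u} = c_1^{a_1} c_2^{a_2} \cdots c_N^{a_N}, \qquad {\bf v} = c_1^{b_1} c_2^{b_2} \cdots c_N^{b_N},
\]
with all $a_i, b_i \ge 1$. Applying $\Theta$ factor by factor gives $\Theta({\bf u}) = \prod_i \Theta(c_i)^{a_i}$ and $\Theta({\bf v}) = \prod_i \Theta(c_i)^{b_i}$, so it suffices to understand the $i$-th factors and then reassemble.

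First I would split into the two cases dictated by property (*). If $c_i$ is non-linear in $\bf u$ (equivalently in $\bf v$, since $\non({\bf u}) = \non({\bf v})$), then (*) forces $\Theta(c_i)$ to involve only one variable, hence $\Theta(c_i)=d_i^+$ for a single variable $d_i$; consequently $\Theta(c_i)^{a_i}$ and $\Theta(c_i)^{b_i}$ are each a single island on $d_i$, so the two $i$-th factors have the identical skeleton $d_i$. If instead $c_i$ is linear, then because $\lin({\bf u}) = \lin({\bf v})$ the variable $c_i$ occurs exactly once in each of $\bf u$ and $\bf v$, forcing $a_i = b_i = 1$; hence the $i$-th factors are literally equal to $\Theta(c_i)$ and in particular share a skeleton. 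Thus in every case the $i$-th factor of $\Theta({\bf u})$ and the $i$-th factor of $\Theta({\bf v})$ have the same skeleton.

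The remaining step is to promote ``corresponding factors have equal skeletons'' to ``$\Theta({\bf u})$ and $\Theta({\bf v})$ have equal skeletons.'' For this I would invoke the elementary observation that the skeleton of a concatenation is unchanged if each factor is first replaced by its own skeleton, since island mergers occur only at the junctions between consecutive factors and whether such a merger happens depends solely on the last variable of one factor and the first variable of the next — data already recorded in each factor's skeleton. I expect this junction bookkeeping to be the only genuine subtlety: one must be sure that exactly the same mergers occur in $\Theta({\bf u})$ as in $\Theta({\bf v})$, and this holds precisely because corresponding factors share a skeleton, hence agree in their first and last variables (with nonzero exponents $a_i,b_i\ge 1$ on both sides, so no island is ever lost). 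Concatenating the equal factor-skeletons and applying the same mergers therefore yields the same skeleton for $\Theta({\bf u})$ and $\Theta({\bf v})$, so the two words are of the same type.
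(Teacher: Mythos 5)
Your proof is correct and takes essentially the same route as the paper's: factor $\bf u$ and $\bf v$ into corresponding powers $c_i^{a_i}$ versus $c_i^{b_i}$, use property (*) to force $\Theta(c_i)$ to be a power of a single variable when $c_i$ is non-linear, use $\lin({\bf u})=\lin({\bf v})$ to force $a_i=b_i=1$ when $c_i$ is linear, and then reassemble the factors. The only difference is that you make explicit the junction-merger bookkeeping for skeletons of concatenations, which the paper leaves implicit in its final sentence.
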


\begin{proof}
Since $\bf u$ and $\bf v$ are of the same type, for some $r \ge 1$ and $u_1, \dots,u_r, v_1, \dots, v_r >0$ we have ${\bf u} =  c_1^{u_1} c_2^{u_2}\dots c_r^{u_r}$ and  ${\bf v} =  c_1^{v_1}c_2^{v_2}\dots  c_r^{v_r}$,
where $c_1, \dots , c_r$ are not necessarily distinct variables.

 First, let us prove that for each $1 \le i \le r$ the words $\Theta({c}_i^{u_i})$ and $\Theta({c}_i^{v_i})$ are of the same type.
Indeed, if $c_i$ is linear in $\bf u$ (and in $\bf v$) then $u_i=v_i=1$ and $\Theta({c}_i^{u_i}) =  \Theta({c}_i^{v_i})$.
If $c_i$ is non-linear in $\bf u$  (and in $\bf v$) then $\Theta({c}_i^{u_1}) = x^*$ for some variable $x$ and  $\Theta({c}_i^{v_i})$ is a power of the same variable.

Since \[\Theta({\bf u}) = \Theta({c}_1^{u_1}{c}_2^{u_2}\dots {c}_r^{u_r} ) = \Theta({c}_1^{u_1}) \Theta({c}_2^{u_2})\dots \Theta({c}_r^{u_r})\] and
\[\Theta({\bf v}) = \Theta({c}_1^{v_1}{c}_2^{v_2}\dots {c}_r^{v_r} ) = \Theta({c}_1^{v_1}) \Theta({c}_2^{v_2})\dots \Theta({ c}_r^{v_r}),\]
we conclude that $\Theta({\bf u})$ and $\Theta({\bf v})$ are of the same type.
\end{proof}

If $x$ and $y$ are two distinct variables then $E_{x=y}$ denotes a substitution that renames $y$ by $x$ and is identical on all other variables.

\begin{fact} \label{Eu}  Given a word $\bf u$ and a substitution $\Theta: \mathfrak A \rightarrow \mathfrak A^+$, one can equalize some variables in $\bf u$ so that the resulting word $E({\bf u})$ has the following properties:

(i) $\Theta(E({\bf u}))$ is of the same type as  $\Theta ({\bf u})$;

(ii) for every  $x, y \in \con (E({\bf u}))$,  if the words $\Theta(x)$  and  $\Theta(y)$ are powers of the same variable then $x=y$.

\end{fact}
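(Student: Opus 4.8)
The plan is to build $E$ explicitly from an equivalence relation on $\con({\bf u})$ and then check the two clauses separately, with clause (i) being the point that needs care. First I would call a variable $z \in \con({\bf u})$ \emph{simple} if $\Theta(z)$ is a power of a single variable, say $\Theta(z) = c_z^{n_z}$ with $n_z \ge 1$, and leave every non-simple variable (one whose image involves at least two distinct variables) untouched. On the simple variables define $z \sim z'$ to mean $c_z = c_{z'}$, i.e. their $\Theta$-images are powers of the same variable; in each $\sim$-class pick one representative and let $E$ be the composition of the renamings $E_{x=y}$ that send every simple variable to the representative of its class. Then $\con(E({\bf u}))$ is exactly the set of chosen representatives together with the non-simple variables of ${\bf u}$.

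Clause (ii) will then be immediate from the construction. If $x,y \in \con(E({\bf u}))$ are distinct and $\Theta(x),\Theta(y)$ are powers of one common variable $c$, then both $x$ and $y$ are simple with $c_x = c_y = c$, so they lie in the same $\sim$-class; but each class survives in $\con(E({\bf u}))$ as a single representative, forcing $x=y$, a contradiction. (If either variable were non-simple its image would not be a power of a single variable, so that situation cannot occur.)

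For clause (i) I would argue that equalization alters $\Theta({\bf u})$ only by changing exponents, never the variable standing at a given position, and that this cannot change the type. Writing the spelling of ${\bf u}$ as $z^{(1)}z^{(2)}\cdots z^{(k)}$, we get $\Theta(E({\bf u})) = \Theta(E(z^{(1)}))\cdots \Theta(E(z^{(k)}))$, where for each $j$ either $z^{(j)}$ is non-simple and $\Theta(E(z^{(j)})) = \Theta(z^{(j)})$, or $z^{(j)}$ is simple with base $c$ and both $\Theta(E(z^{(j)}))$ and $\Theta(z^{(j)})$ are nonempty powers of this same $c$. Hence $\Theta(E({\bf u}))$ arises from $\Theta({\bf u})$ by replacing, at finitely many positions, a block $c^m$ by a block $c^{m'}$ with $m,m'\ge 1$ and identical base $c$. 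Since the type of a word is recorded by its island sequence, equivalently by the word obtained by collapsing each island $c^+$ to a single $c$, and since such a replacement keeps the base letter and the first and last letters of each image block fixed, the island sequence is preserved and $\Theta(E({\bf u}))$ is of the same type as $\Theta({\bf u})$.

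The main obstacle lies precisely in this last step: I must be sure that when a simple variable's image $c^m$ is adjacent to another power of $c$ (from a neighbouring occurrence, or from another variable in the same $\sim$-class, or from the end of a non-simple image that happens to begin or end in $c$), the two runs merge into one island both before and after equalization, so that neither the number nor the order of islands changes. This is settled by noting that the merging of adjacent islands is governed only by the base letters at the block boundaries, which equalization leaves intact; the cleanest way to present it is probably an induction on the number of individual $E_{x=y}$ steps, verifying that a single renaming of two variables whose images are powers of a common variable preserves the type, and then iterating.
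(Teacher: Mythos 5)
Your proof is correct and takes essentially the same approach as the paper: both arguments equalize variables whose $\Theta$-images are powers of a common variable, the only difference being that you collapse all such variables at once via an equivalence relation with chosen representatives, whereas the paper applies renamings $E_{x=y}$ one pair at a time until property (ii) holds, terminating because the variable count strictly decreases. Your island-sequence verification of clause (i) supplies the detail behind a step the paper merely asserts (``Notice that $\Theta(E_{x=y}({\bf u}))$ is of the same type\dots''), which is a welcome addition but not a different method.
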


\begin{proof} If $\bf u$ satisfies Property (ii) then take $E$ to be the identity substitution $E_{x=x}$ and we are done.
If  $\bf u$ does not satisfy Property (ii) then for some  $x \ne y \in \con({\bf u})$ the words  $\Theta(x)$  and  $\Theta(y)$ are powers of the same variable. If  $E_{x=y}(\bf u)$ satisfies Property (ii) then take $E=E_{x=y}$. Notice that  $\Theta(E_{x=y}({\bf u}))$ is of the same type as  ${\bf U}$.  If not,  then for some  $p \ne z \in \con(E_{x=y}({\bf u}))$ the words  $\Theta(p)$  and  $\Theta(z)$ are powers of the same variable.  If  $E_{p=z} E_{x=y}(\bf u)$ satisfies Property (ii) then take $E=E_{p=z} E_{x=y}$ and we are done. And so on. Since the number of variables in $E({\bf u})$ decreases, eventually the
word $E({\bf u})$ will satisfy Property (ii).
\end{proof}

\begin{lemma} \label{prop1} Let $\ell >1$ and $\bf U$ be a word such that
 for each  $\{x,y\}\subseteq  \con({\bf U})$ the height of  ${\bf U}(x,y)$ is at most $\ell$.
 Let $\Theta: \mathfrak A \rightarrow \mathfrak A^+$  be a substitution which satisfies Property (ii) in Fact \ref{Eu}.
If $\Theta({\bf u}) = {\bf U}$ then $\bf u$  satisfies Condition (i)  in Lemma \ref{correct},  that is, for each  $\{x,y\}\subseteq  \con({\bf u})$ the height of  ${\bf u}(x,y)$ is at most $\ell$.

\end{lemma}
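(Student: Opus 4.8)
The plan is to fix an arbitrary two-element subset $\{x,y\}\subseteq\con({\bf u})$ and prove directly that the height of ${\bf u}(x,y)$ is at most $\ell$; when $\con({\bf u})$ has fewer than two variables the statement is vacuous. Write $w={\bf u}(x,y)$ and let $h$ be its height, so that $w=c_1^{m_1}c_2^{m_2}\cdots c_h^{m_h}$ with $c_i\in\{x,y\}$, $c_i\neq c_{i+1}$ and each $m_i\ge 1$. The whole argument reduces to exhibiting two distinct variables $a,b\in\con({\bf U})$ for which the height of ${\bf U}(a,b)$ is at least $h$: since that height is at most $\ell$ by hypothesis, it follows that $h\le\ell$.

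First I would choose $a$ and $b$. I claim one can take $a\in\con(\Theta(x))$ and $b\in\con(\Theta(y))$ with $a\neq b$, and this is the one place where Property (ii) of Fact \ref{Eu} is used. Indeed, the only way such a choice can fail is if every variable of $\Theta(x)$ equals every variable of $\Theta(y)$, which forces $\con(\Theta(x))=\con(\Theta(y))$ to be a single common variable; but then $\Theta(x)$ and $\Theta(y)$ are powers of the same variable, so Property (ii) gives $x=y$, contrary to $x\neq y$. Since $x,y\in\con({\bf u})$, both $\Theta(x)$ and $\Theta(y)$ are factors of $\Theta({\bf u})={\bf U}$, so $a,b\in\con({\bf U})$.

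Next I would compare ${\bf u}(x,y)$ with ${\bf U}(a,b)$. Because ${\bf U}=\Theta({\bf u})$, the letters of ${\bf U}$ contributed by the occurrences of $x$ and $y$, read in order and projected onto $\{a,b\}$, are exactly the letters of $\Theta(w)(a,b)$; thus $\Theta(w)(a,b)$ is obtained from ${\bf U}(a,b)$ by deleting the $a$'s and $b$'s coming from the images of the other variables. Over a two-letter alphabet, deleting letters cannot create new islands, so the height of ${\bf U}(a,b)$ is at least the height of $\Theta(w)(a,b)$, and it suffices to show the latter is at least $h$. For this I would use a marking argument: inside the image of each block $c_i^{m_i}$ mark one occurrence of $a$ if $c_i=x$ (which exists since $a\in\con(\Theta(x))$ and $m_i\ge 1$) and one occurrence of $b$ if $c_i=y$. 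These $h$ marked letters occur in order along $\Theta(w)(a,b)$ and, as the $c_i$ alternate and $a\neq b$, their labels alternate $a,b,a,b,\dots$. Consecutive markers therefore carry different letters, which forces at least $h-1$ changes between adjacent letters and hence at least $h$ islands.

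Combining the two estimates gives $h\le\ell$, as required. I expect the selection of $a\neq b$ to be the main obstacle: it is the only step that genuinely uses Property (ii), and it is precisely what excludes the degenerate situation in which the images of two distinct variables of ${\bf u}$ collapse onto a single variable and thereby merge two islands into one. Once $a$ and $b$ are fixed, the remaining island bookkeeping, namely the monotonicity of island count under deletion and the counting of alternating markers, is routine.
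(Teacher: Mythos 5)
Your proof is correct and takes essentially the same approach as the paper's: both use Property (ii) of Fact \ref{Eu} to select distinct variables $a\in\con(\Theta(x))$ and $b\in\con(\Theta(y))$, and then conclude that the height of ${\bf U}(a,b)$ is at least the height of ${\bf u}(x,y)$. The only difference is presentational --- the paper argues by contradiction and leaves the final height comparison implicit, whereas you prove it explicitly via the deletion-monotonicity and alternating-marker argument.
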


\begin{proof}  Suppose that for some  $\{x,y\}\subseteq  \con({\bf u})$ the word  ${\bf u}(x,y)$ has height bigger than $\ell$.
Since  $\Theta$ satisfies Property (ii) in Fact \ref{Eu}, $\Theta(x)$ contains $x'$ and  $\Theta(y)$ contains $y'$ for some $x' \ne y'$.
Therefore,   ${\bf U}(x', y')$ also has height  bigger than $\ell$. A contradiction.
\end{proof}

\section{Proof of Theorem \ref{main}}\label{sec:thm}

The following lemma implies \cite[Corollary 2.2]{OS} and is a special case of Fact 2.1 in \cite{OS}.

\begin{lemma} \label{nfblemma} Let $\tau$ be an equivalence relation on the free semigroup $\mathfrak A^+$ and $S$ be a semigroup.
Suppose that for infinitely many $n$, $S$ satisfies an identity ${\bf U}_n \approx {\bf V}_n$ in at least $n$ variables
such that ${\bf U}_n$ and ${\bf V}_n$ are not $\tau$-related.

Suppose also that for every identity ${\bf u} \approx {\bf v}$ of $S$ in less than $n$ variables, every
 word  $\bf U$ such that ${\bf U} \tau {\bf U}_n$ and every substitution
 $\Theta: \mathfrak A \rightarrow \mathfrak A^+$ such that $\Theta({\bf u}) = {\bf U}$ we have
 ${\bf U} \tau \Theta({\bf v})$.  Then $S$ is NFB.
\end{lemma}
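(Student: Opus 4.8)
The plan is to argue by contradiction, exploiting the fact that any finite basis consists of identities in boundedly many variables. Suppose $S$ is finitely based, and let $\Sigma$ be a finite basis for the identities of $S$. Since $\Sigma$ is finite, there is an integer $N$ bounding the number of variables occurring in any single identity of $\Sigma$. Using the first hypothesis, I would fix one index $n$ among the infinitely many available with $n > N + 2$; for this $n$ we have $S \models {\bf U}_n \approx {\bf V}_n$ with ${\bf U}_n$ and ${\bf V}_n$ not $\tau$-related, and the invariance condition of the second hypothesis holds for every identity of $S$ in fewer than $n$ variables. Because $\Sigma$ is a basis and $S$ satisfies ${\bf U}_n \approx {\bf V}_n$, this identity is a consequence of $\Sigma$, so by completeness of equational logic there is a finite sequence of elementary deductions
\[ {\bf U}_n = {\bf W}_0 \to {\bf W}_1 \to \cdots \to {\bf W}_m = {\bf V}_n, \]
in which each step replaces one factor of ${\bf W}_i$ that is a substitution instance of one side of an identity in $\Sigma$ by the corresponding instance of the other side.

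The key technical step is to recast each such \emph{contextual} deduction as a \emph{top-level} substitution instance, so that the second hypothesis applies verbatim. Suppose the step ${\bf W}_i \to {\bf W}_{i+1}$ uses an identity ${\bf s} \approx {\bf t}$ of $\Sigma$ in the form ${\bf W}_i = {\bf p}\,\sigma({\bf s})\,{\bf q}$ and ${\bf W}_{i+1} = {\bf p}\,\sigma({\bf t})\,{\bf q}$. I would introduce fresh variables $z_1, z_2 \notin \con({\bf s})$ and set ${\bf u} = z_1 {\bf s} z_2$ and ${\bf v} = z_1 {\bf t} z_2$ (omitting $z_1$ if ${\bf p}$ is empty and $z_2$ if ${\bf q}$ is empty, since substitutions into $\mathfrak A^+$ have nonempty image). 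Then ${\bf u} \approx {\bf v}$ is again an identity of $S$, with $|\con({\bf u})| \le N + 2 < n$. Extending $\sigma$ to a substitution $\Theta$ with $\Theta(z_1) = {\bf p}$ and $\Theta(z_2) = {\bf q}$ gives $\Theta({\bf u}) = {\bf W}_i$ and $\Theta({\bf v}) = {\bf W}_{i+1}$. Since an identity is unaffected by swapping its two sides and $\tau$ is symmetric, this representation is available whichever direction the deduction runs, so every step is realized as $\Theta({\bf u}) \to \Theta({\bf v})$ for some identity ${\bf u} \approx {\bf v}$ of $S$ in fewer than $n$ variables.

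With this reduction in hand I would prove by induction on $i$ that ${\bf W}_i \tau {\bf U}_n$ for every $i$. The base case is immediate since ${\bf W}_0 = {\bf U}_n$. For the inductive step, assume ${\bf W}_i \tau {\bf U}_n$ and write the step as above with ${\bf U} := {\bf W}_i = \Theta({\bf u})$; the second hypothesis, applied to this identity ${\bf u} \approx {\bf v}$, this word ${\bf U} \tau {\bf U}_n$, and this $\Theta$, yields ${\bf W}_i = {\bf U} \tau \Theta({\bf v}) = {\bf W}_{i+1}$, whence ${\bf W}_{i+1} \tau {\bf U}_n$ by transitivity. Taking $i = m$ gives ${\bf V}_n \tau {\bf U}_n$, contradicting the assumption that ${\bf U}_n$ and ${\bf V}_n$ are not $\tau$-related. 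Hence $S$ is NFB. I expect the main obstacle to be exactly the reconciliation carried out in the second paragraph: the second hypothesis speaks only of whole-word substitution instances $\Theta({\bf u})$, whereas deductions from a basis rewrite factors in context, and one must check that absorbing the surrounding prefix and suffix into fresh linear variables keeps the variable count strictly below $n$ (this is the role of the choice $n > N+2$) while preserving membership in the identities of $S$.
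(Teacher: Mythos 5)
Your proposal is correct and takes essentially the same route as the paper's proof: express a hypothetical derivation of ${\bf U}_n \approx {\bf V}_n$ from a basis with a bounded number of variables as a chain of words ${\bf U}_n = {\bf W}_1, \dots, {\bf W}_l = {\bf V}_n$ in which consecutive words are substitution instances of the two sides of an identity of $S$ in fewer than $n$ variables, propagate $\tau$-relatedness along the chain via the invariance hypothesis, and contradict the fact that ${\bf U}_n$ and ${\bf V}_n$ are not $\tau$-related. The only difference is that the paper directly asserts each deduction step has the whole-word form ${\bf W}_i = \Theta_i({\bf u}_i)$, ${\bf W}_{i+1} = \Theta_i({\bf v}_i)$ with ${\bf u}_i \approx {\bf v}_i \in \Sigma$ (deferring the justification to Fact 2.1 in \cite{OS}), whereas you make this reconciliation explicit by absorbing the context ${\bf p}, {\bf q}$ into fresh linear variables $z_1, z_2$, which is why you need $n > N+2$ rather than $n > m$ and why you apply the hypothesis to identities of $S$ not necessarily lying in $\Sigma$ --- a legitimate and careful filling-in of the same argument.
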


\begin{proof} Take an arbitrary $m>0$ and let $\Sigma$ be a set of identities of $S$ in at most $m$ variables.
By our assumption, $S$ satisfies an identity ${\bf U}_n \approx {\bf V}_n$ in at least $n$ variables such that $n>m$ and the words ${\bf U}_n$ and ${\bf V}_n$ are not $\tau$-related.

If  ${\bf U}_{n} \approx {\bf V}_{n}$ was a consequence from $\Sigma$ then one could find a sequence of words ${\bf U}_{n}={\bf W}_1 \approx {\bf W}_2 \approx \dots \approx {\bf W}_l={\bf V}_{n}$ and substitutions $\Theta_1, \dots, \Theta_{l-1} (\mathfrak A \rightarrow \mathfrak A ^+$) such that for each  $i=1, \dots, l-1$ we have ${\bf W}_i=\Theta_i({\bf u}_i)$ and  ${\bf W}_{i+1}=\Theta_i({\bf v}_i)$ for some identity ${\bf u}_i \approx {\bf v}_i \in \Sigma$.
Since every identity in $\Sigma$ involves less than $n$ variables, we have  ${\bf U}_{n}={\bf W}_1 \tau {\bf W}_2 \tau \dots \tau {\bf W}_{l-1} \tau {\bf W}_l={\bf V}_{n}$.
Thus  ${\bf U}_{n} \tau {\bf V}_n$.

Since ${\bf U}_n$ and ${\bf V}_n$ are not $\tau$-related,  ${\bf U}_{n} \approx {\bf V}_{n}$ is not a consequence from $\Sigma$. Since $m$ and $\Sigma$ were arbitrary, $S$ is NFB.
\end{proof}

 Let $\bf U$ be a word of the same type as
${\bf U}_n = x_1x_2 \dots x_{n^2}\hskip .03in  {\bf J}_{n} \hskip.03in x_{n^2} \dots x_2x_1$.
Then the occurrences of $x_{n^2}$ form two islands in $\bf U$. We refer to these two islands as  ${_1x_{n^2}^+}$ and  ${_2x_{n^2}^+}$
counting rightwards from the left. For each $1 \le i < n^2$, the occurrences of $x_{i}$ form three islands in $\bf U$. We refer to these three islands as  ${_1x_{i}^+}$,   ${_2x_{i}^+}$ and  ${_3x_{i}^+}$ counting rightwards from the left.

\begin{lemma} \label{dis} Let $\bf U$ be a word of the same type as
\[{\bf U}_n = x_1x_2 \dots x_{n^2}\hskip .03in {\bf J}_n \hskip .03in x_{n^2} \dots x_2x_1.\]
 Then $\bf U$ has the following properties:

(P1)  for each $1 \le i \ne j \le n^2$ the word $x_i x_j$ appears at most once in $\bf U$ as a subword;

(P2)  for each $1\le i \le n^2$ there are occurrences of at least  $n$  pairwise distinct variables between any two islands formed by $x_i$ in $\bf U$.
\end{lemma}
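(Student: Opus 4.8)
The plan is to reduce both properties to the combinatorics of the \emph{island sequence} of $\bf U$. Since $\bf U$ has the same type as ${\bf U}_n$, the left-to-right sequence of islands of $\bf U$ coincides with that of ${\bf U}_n$, and both (P1) and (P2) depend only on this sequence (which variable is adjacent to which, and which distinct variables lie between two islands), not on the individual exponents. So I may argue directly with ${\bf U}_n = (x_1\dots x_{n^2})\,{\bf J}_n\,(x_{n^2}\dots x_1)$, keeping in mind the single merge: the final $x_{n^2}$ of ${\bf J}_n$ and the initial $x_{n^2}$ of the suffix are adjacent and fuse into one island, which is exactly why $x_{n^2}$ has two islands while every other $x_i$ has three.

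For (P1), first I would observe that a factor $x_ix_j$ with $i\ne j$ occurs in $\bf U$ precisely when some island of $x_i$ is immediately followed by some island of $x_j$; thus it suffices to show that in the island sequence each ordered pair of distinct letters occurs at most once as consecutive letters. I would then sort all such consecutive pairs $(x_a,x_b)$ by their origin: (1) inside the prefix, where $b-a=1$; (2) at the prefix--${\bf J}_n$ seam, the single pair $(x_{n^2},x_1)$ with $b-a=1-n^2$; (3) inside a group of ${\bf J}_n$, where consecutive subscripts differ by $n$, so $b-a=n$; (4) at a seam between two groups of ${\bf J}_n$, the pairs $(x_{i+n^2-n},x_{i+1})$ with $b-a=1+n-n^2$; and (5) inside the suffix, including the merged seam, where $b-a=-1$. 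The key computation is that for $n>3$ the five values $1,\,1-n^2,\,n,\,1+n-n^2,\,-1$ are pairwise distinct, so pairs of different types can never coincide; and within each type the first subscript $a$ determines the pair, so no pair repeats. Since $b-a$ is an invariant of a pair, this yields (P1).

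For (P2) I would treat the gaps between consecutive islands; the gap between the outer two islands contains the middle one, hence is longer, and the two-island variable $x_{n^2}$ has all of $x_1,\dots,x_{n^2-1}$ between its islands. For a variable $x_i$ with $i<n^2$, write its ${\bf J}_n$-occurrence as the $p$-th letter of group $g$, so $i=g+(p-1)n$. The gap between the prefix island ${_1x_i^+}$ and the ${\bf J}_n$ island ${_2x_i^+}$ contains the prefix tail $x_{i+1}\dots x_{n^2}$ together with every group of ${\bf J}_n$ preceding group $g$; if $g\ge 2$ then the whole first group, with its $n$ distinct letters, lies in the gap and we are done, while if $g=1$ the prefix tail already has $n^2-i\ge n$ letters unless $p=n$, in which case the $n-1$ prefix-tail letters together with $x_1$ (a letter of group $1$ lying before $x_i$, whose subscript is smaller than every tail subscript) give $n$ distinct letters. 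The gap between ${_2x_i^+}$ and the suffix island ${_3x_i^+}$ is handled in the mirror fashion, using the suffix head $x_{n^2}\dots x_{i+1}$ and the groups of ${\bf J}_n$ following group $g$; here the extreme case $g=n$ forces $p\le n-1$ (since $p=n$ would give $i=n^2$), so the suffix head alone contributes $n(n-p)\ge n$ distinct letters.

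I expect the main obstacle to be the bookkeeping in the extreme cases of (P2) --- when $x_i$ sits in the first or last group of ${\bf J}_n$ and no complete neighbouring group fits in the gap --- where one must combine the short prefix/suffix run with the partial group and verify that the two sets of variables are disjoint enough to still total $n$. The remainder is routine once the island sequence is set up and the difference invariant for (P1) is in place.
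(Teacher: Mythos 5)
Your proof is correct. For (P2) it is essentially the paper's argument: reduce to gaps between consecutive islands and exhibit an explicit set of $n$ distinct variables in each gap. The paper splits on whether $n^2-i<n$ (i.e., whether $x_i$ is the last letter of its group of ${\bf J}_n$), taking as witnesses the first $n-1$ letters of group $1$ (resp.\ group $n$) together with an island of $x_{n^2}$, or else the prefix tail (resp.\ suffix head) together with $x_{n^2}$; you organize the cases by the group index $g$ and the position $p$ of $x_i$ inside ${\bf J}_n$ instead, but the witness-counting is the same and all your extreme cases ($g=1$, $p=n$ and $g=n$, $p\le n-1$) check out. The genuine difference is (P1): the paper dismisses it as evident, whereas you actually prove it, classifying consecutive island pairs $(x_a,x_b)$ by the invariant $b-a$, whose five possible values $1$, $1-n^2$, $n$, $1+n-n^2$, $-1$ are pairwise distinct for $n>3$, and observing that within each class a pair is determined by its first letter. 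Besides making the ``evident'' step rigorous, this pinpoints exactly where the restriction on $n$ matters: for $n=2$ one has $1+n-n^2=-1$, and (P1) genuinely fails there (in ${\bf U}_2$ the subword $x_3x_2$ occurs both at the seam between the two groups of ${\bf J}_2$ and in the suffix) --- a fact the paper's proof leaves invisible. So your write-up is a faithful but somewhat sharper version of the paper's proof.
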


\begin{proof} Property (P1) is evident.
To verify Property (P2) notice that there are occurrences of  $n^2-1$  pairwise distinct variables between  ${_1x_{n^2}^+}$ and  ${_2x_{n^2}^+}$. If $1 \le i < n^2$ consider two cases.

{\bf Case 1}: $n^2 - i < n$.

In this case, the following $(n-1)$ islands are located  between  ${_1x_{n^2}^+}$ and  ${_2x_i^+}$:
\[\{{_2x^+_1}, {_2x^+_{1+n}}, {_2x^+_{1+2n}}, \dots, {_2x^+_{1+(n-2)n}}\}.\]
Therefore, there are at  least  $n$  pairwise distinct variables between  ${_1x_i^+}$ and ${_2x_i^+}$.

The following $(n-1)$ islands are located between  ${_2x_i^+}$ and  ${_2x_{n^2}^+}$:
\[\{{_2x^+_n}, {_2x^+_{2n}}, {_2x^+_{3n}}, \dots, {_2x^+_{n^2-n}}\}.\]
Therefore, there are at  least  $n$  pairwise distinct variables between  ${_2x_i^+}$ and ${_3x_i^+}$.

{\bf Case 2}: $n^2 - i \ge n$.

In this case, the following $(n-1)$ islands are located  between  ${_1x_{i}^+}$ and  ${_1x_{n^2}^+}$:
\[\{{_1x^+_{i+1}}, {_1x^+_{i+2}}, {_1x^+_{i+3}}, \dots, {_1x^+_{n^2-1}}\}.\]
Therefore, there are at  least  $n$  pairwise distinct variables between  ${_1x_i^+}$ and ${_2x_i^+}$.

The following (n-1) islands are located between  ${_2x_{n^2}^+}$ and  ${_3x_{i}^+}$:
\[\{{_2x^+_{n^2-1}}, {_2x^+_{n^2-2}}, {_2x^+_{n^2-3}}, \dots, {_2x^+_{i+1}}\}.\]
Therefore, there are at  least  $n$  pairwise distinct variables between  ${_2x_i^+}$ and ${_3x_i^+}$.
\end{proof}

\begin{proof}[Proof of Theorem \ref{main}]  Let $\tau$ be the equivalence relation on $\mathfrak A^+$ defined by ${\bf u} \tau {\bf v}$ if $\bf u$ and $\bf v$ are of the same type.
First, notice that the words  ${\bf U}_n$ and  ${\bf V}_n$ are not of the same type. Indeed,  ${\bf U}_n$
contains $x_{n^2} x_1$ as a subword but  ${\bf V}_n$ does not have this subword.

Now let  ${\bf U}$ be of the same type as ${\bf U}_n$.
Let ${\bf u} \approx {\bf v}$ be an identity of $M$ in less than $n$ variables  and let
 $\Theta: \mathfrak A \rightarrow \mathfrak A^+$  be a substitution such that $\Theta({\bf u}) = {\bf U}$.
The word $E({\bf u})$ also involves less than $n$ variables and $E({\bf u}) \approx E({\bf v})$ is also an identity of $M$.

 Since ${\bf U}(x_i, x_j) = x_i^+ x_j^+ x_i^+ x_j^+ x_i^+$  for each  $1\le i < j \le n^2$,
the height of ${\bf U}(x_i, x_j)$ is $5$. So, by Lemma \ref{prop1}, $E({\bf u})$ satisfies Condition (i)  in Lemma  \ref{correct}  for $\ell=5$, that is,  for each  $\{x,y\}\subseteq  \con(E({\bf u}))$, the height of $E({\bf u})(x,y)$ is at most $5$.

If $y \in \non(E({\bf u}))$ then in view of Property (P1) in Lemma  \ref{dis}, $\Theta(y)=x_i^*$ for some $1\le i \le n^2$.
Since the occurrences of $x_i$ form at most three islands in $\bf U$ and $\Theta$ satisfies  Property (ii) in Fact \ref{Eu},
the occurrences of $y$ also form at most three islands in $\bf u$.

Due to Property (P2) in Lemma  \ref{dis},  there are occurrences of at least  $n$  pairwise distinct variables between any two islands formed by $x_i$ in $\bf U$. Since $E({\bf u})$ involves less than $n$ variables, there is a variables $t\in \con(E({\bf u}))$  between
any two islands formed by $y$ in $E({\bf u})$ such that $\Theta(t)$ contains $x_ix_j$ as a subword  for some $1\le i \ne j \le n^2$. Due to Property (P1) in Lemma  \ref{dis}, $t$ is linear in $E({\bf u})$.  Thus $E({\bf u})$ satisfies Condition (ii)  in Lemma  \ref{correct}.
 Therefore, $E({\bf v})$ is  of the same type as $E({\bf u})$ by Lemma \ref{correct}.

Due  to Property (P1) in Lemma  \ref{dis}, $\Theta$ satisfies Condition (*) in  Lemma \ref{redef}.
Consequently, the word $\Theta(E({\bf v}))$ has the same type as  $\Theta(E({\bf u}))$  by Lemma \ref{redef}.
Thus we have
\[ {\bf U} = \Theta({\bf u}) \stackrel{Fact \ref{Eu}}{\tau} \Theta (E ({\bf u})) \stackrel{Lemma \ref{redef}} {\tau} \Theta (E ({\bf v})) \stackrel{Fact \ref{Eu}}{\tau} \Theta({\bf v}).\]
Since $\Theta({\bf v})$ is of the same type as $\bf U$, $M$ is NFB by Lemma \ref{nfblemma}.
\end{proof}

\section{ Sets of isoterms for $L_{\ell}^1$ are FB when $\ell \le 5$}

If $\var S(W) = \var S(W')$ we say that sets of words $W$ and $W'$ are equationally equivalent and write $W \sim W'$.
A word $\bf u$ is called {\em $k$-limited} if each variable occurs in $\bf u$ at most $k$ times.

\begin{fact}  \label{isot}

 (i)$\iso(L_2^1) = \iso(L_3^1) \sim\{ab\}$.

(ii) $\iso(L_4^1) = \iso(L_5^1) \sim\{abab, a^2b^2, ab^2a\}$.

\end{fact}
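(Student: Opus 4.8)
The plan is to reduce each equational equivalence $\iso(L_\ell^1) \sim W_0$ to two one-sided isoterm containments, and to obtain the two set equalities from a single uniform combinatorial description of $\iso(L_\ell^1)$.

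First I would record the tool that converts $\sim$ into statements about isoterms. By Fact \ref{prec}, for any sets of words $W$ and $W'$ one has $\var S^1(W) \subseteq \var S^1(W')$ if and only if $S^1(W) \in \var S^1(W')$, that is, if and only if every word in $W$ is an isoterm for $S^1(W')$. In particular, $W \subseteq W'$ already forces $\var S^1(W) \subseteq \var S^1(W')$, since every word of $W'$ is an isoterm for $S^1(W')$. Writing $W_0$ for $\{ab\}$ in case (i) and for $\{abab, a^2b^2, ab^2a\}$ in case (ii), the relation $\iso(L_\ell^1) \sim W_0$ therefore splits into (a)~$W_0 \subseteq \iso(L_\ell^1)$, which by the monotonicity just noted yields $\var S^1(W_0) \subseteq \var S^1(\iso(L_\ell^1))$, and (b)~every word of $\iso(L_\ell^1)$ is an isoterm for $S^1(W_0)$, which by Fact \ref{prec} yields the reverse inclusion $\var S^1(\iso(L_\ell^1)) \subseteq \var S^1(W_0)$.

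For (a) and for the two set equalities I would pin down $\iso(L_\ell^1)$ combinatorially. The sufficiency half is supplied by the machinery already developed: Lemma \ref{L51} gives Property (C$_\ell$), and Lemma \ref{correct} (with Facts \ref{xx}, \ref{xy}, \ref{xtx}) shows that a word all of whose two-generated projections have height at most $\ell$, with a linear variable separating consecutive islands of every non-linear variable, can form an identity of $L_\ell^1$ only with a word of the same type; combined with the isotermhood of $x^k$ for $2k \le \ell$ (Fact \ref{xx}) this bounds the admissible island sizes. Concretely this shows that $ab$ is an isoterm when $\ell \le 3$ and that $abab$, $a^2b^2$, $ab^2a$ are isoterms when $\ell \ge 4$, which is exactly (a). The necessity half is where the value of $\ell$ enters: I would exhibit explicit identities of $L_\ell^1$ witnessing that a word outside the admissible form is not an isoterm, for instance $x^2 \approx x^3$ in $L_2^1$ and $L_3^1$ versus $x^3 \approx x^4$ in $L_4^1$ and $L_5^1$ (evaluate both sides at $a$, at $ab$, and at $ba$), together with height-collapsing identities that eliminate any projection of height $\ell+1$. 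Since the witnessing identities are the same for $\ell = 2,3$ and the same for $\ell = 4,5$, the resulting descriptions of $\iso(L_2^1)$ and $\iso(L_3^1)$ coincide, and so do those of $\iso(L_4^1)$ and $\iso(L_5^1)$, giving the two set equalities.

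The substance of the statement is (b): every isoterm $\bf u$ of $L_\ell^1$ is an isoterm of $S^1(W_0)$. Here I would feed in the combinatorial description from the previous step, namely that each two-generated projection ${\bf u}(x,y)$ is, up to type, a factor of a product of words from $W_0$, that every island of a non-linear variable has bounded size, and that consecutive such islands are separated by a linear variable. Given an identity $S^1(W_0) \models {\bf u} \approx {\bf v}$, I would argue as in Lemmas \ref{basic} and \ref{2let} that $\bf v$ has the same linear variables in the same order, the same blocks with the same content, and the same two-generated projections as $\bf u$, forcing ${\bf v} = {\bf u}$; the point is that $S^1(W_0)$, whose nonzero elements are precisely the subwords of the words in $W_0$, already separates all the short configurations that can occur inside an $L_\ell^1$-isoterm. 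I expect this inclusion to be the main obstacle, since it requires verifying that the finitely many two-generated patterns realized in such a word are all detected by the small monoid $S^1(W_0)$; in particular the asymmetry of the defining relation (so that in $L_5$ one has $ababa = 0$ but $babab \neq 0$) must be handled with care when confirming that borderline words such as $xyxyx$ and $x^3$ fail to be isoterms and hence impose no additional constraints.
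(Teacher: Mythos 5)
Your high-level skeleton (splitting each $\sim$ into two containments via Fact \ref{prec}) agrees with what the paper implicitly does, and your step (a) is essentially the paper's argument, except that Lemma \ref{correct} cannot be invoked for $abab$, $a^2b^2$, $ab^2a$: these words have no linear variables, so hypothesis (ii) of that lemma fails; the paper instead derives isotermhood from Fact \ref{xx} (which makes every $L_4^1$-identity involving these words balanced) combined with Property (C$_4$). The first genuine gap is your combinatorial description of $\iso(L_\ell^1)$. The correct description -- and what the paper's proof actually establishes -- is that $\iso(L_2^1)=\iso(L_3^1)$ is exactly the set of $1$-limited words and $\iso(L_4^1)=\iso(L_5^1)$ is exactly the set of $2$-limited words. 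Your description is wrong in both directions: ``up to type'' is too permissive ($x^3y^3x^3y^3$ is not an isoterm for $L_4^1$), while the clause requiring a linear variable between consecutive islands of a non-linear variable is too restrictive and contradicts your own step (a), since $abab$ (and the $2$-limited isoterm $xyzxzy$) contains no linear variable at all. Relatedly, your identities for the necessity half do not suffice: $x^2\approx x^3$ only eliminates words with two adjacent occurrences of a variable and cannot show that $xyx$ is not an isoterm for $L_3^1$, nor that $xt_1xt_2x$ is not an isoterm for $L_5^1$; ``height-collapsing identities'' is left unspecified. The missing ingredient is the paper's opening observation that $L_{2k}^1$ and $L_{2k+1}^1$ satisfy $xt_1xt_2\cdots xt_kx \approx x^2t_1xt_2\cdots xt_kx$: since in a monoid one may delete the unneeded $t_i$'s, this forces every isoterm to be $k$-limited, and since the bound depends only on $k=\lfloor \ell/2\rfloor$, it is also precisely what yields the two set equalities $\iso(L_2^1)=\iso(L_3^1)$ and $\iso(L_4^1)=\iso(L_5^1)$.

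The second gap is the mechanism you propose for step (b). Lemmas \ref{basic} and \ref{2let} apply only to monoids satisfying Property (C$_\ell$), and $S^1(W_0)$ never satisfies it: $S^1(\{ab\})\models x^2y\approx x^2yx$ (any substitution with $\Theta(x)\ne 1$ kills both sides, because $a^2=b^2=(ab)^2=0$ there), which violates (C$_2$); similarly $S^1(\{abab,a^2b^2,ab^2a\})\models x^2y^2x^2y^2\approx x^2y^2x^2y^2x^2$, which violates (C$_4$). This is unavoidable: by Corollary \ref{final}, if $S^1(W_0)$ satisfied (C$_\ell$) then $\var S^1(W_0)$ would contain $L_\ell^1$ and hence, by your step (a) and Fact \ref{prec}, equal $\var L_\ell^1$, making $W_0$ non-finitely based -- contradicting \cite{JS} and the corollary that this very fact is used to prove. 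The paper closes step (b) not by any structural lemma about $S^1(W_0)$ but by citing from \cite{JS} the equational equivalences $\{ab\}\sim\{1\text{-limited words}\}$ and $\{abab,a^2b^2,ab^2a\}\sim\{2\text{-limited words}\}$, and then sandwiching $\var S^1(\iso(L_\ell^1))$ between $\var S^1(W_0)$ and $\var S^1(\{k\text{-limited words}\})$. If you want a self-contained substitute, the workable route is elementary but quite different from your Lemmas \ref{basic}/\ref{2let} plan: in a monoid, every identity projects onto each two-variable subalphabet; every two-variable projection of a $k$-limited word ($k\le 2$) is, up to renaming, a subword of a word in $W_0$ and hence an isoterm for $S^1(W_0)$; and a word is uniquely determined by its two-variable projections.
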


\begin{proof} First, notice that for each $k \ge 1$, $L_{2k}^1 =  \langle a,b,1 \mid aa=a, bb=b, (ab)^{k}=0 \rangle$ and
$L_{2k+1}^1 =  \langle a,b,1 \mid aa=a, bb=b, (ab)^ka=0 \rangle$
satisfy  $x t_1 x t_2  x \dots x t_k x  \approx  x^2  t_1 x t_2  x \dots x t_k x$.
Therefore, every isoterm for $L_{2k}^1$ and for $L_{2k+1}^1$ is $k$-limited.

(i) Since $L_2^1$  satisfies Property (C$_2$) by Lemma \ref{L51}, the word $xy$ is an isoterm for
 $L_2^1$  by Fact \ref{xy}. Since $\{ab\}$ is equationally equivalent to the set of all $1$-limited words,
we have  $\iso(L_2^1) = \iso(L_3^1) \sim\{ab\}$.

(ii)  Since $L_4^1$  satisfies Property (C$_4$) by Lemma \ref{L51}, the word $x^2$ is an isoterm for
 $L_4^1$  by Fact \ref{xx}.

Let us show that if ${\bf u} \in \{abab, a^2b^2, ab^2a\}$ then $\bf u$ is an isoterm for  $L_4^1$.
Indeed, assume  $L_4^1 \models {\bf u} \approx {\bf v}$.
Since $x^2$ is an isoterm for  $L_4^1$,  the identity ${\bf u} \approx {\bf v}$ is {\em balanced}, that is, every variable occurs the same number of times in $\bf u$ and $\bf v$. Then $\bf v$ can only be one of the words $\{abab, a^2b^2, ab^2a\}$ modulo renaming $a$ and $b$.
To avoid a contradiction to  Property (C$_4$) we conclude that ${\bf v} = {\bf u}$.

 Since $\{abab, a^2b^2, ab^2a\}$ is  equationally equivalent to the set of all $2$-limited words, we have  $\iso(L_4^1) = \iso(L_5^1) \sim\{abab, a^2b^2, ab^2a\}$.
\end{proof}

Notice that the word $xyyxyx$ is 3-limited but is not an isoterm for $L_6^1$ because $L_6^1 \models xyyxyx \approx xyxyyx$.

Since for each $k>0$ the set of all $k$-limited words is FB \cite{JS}, the result of W. Zhang \cite{WTZ} that $L_3^1$ is NFB, Corollary \ref{L5} and Fact \ref{isot} immediately imply the following.

\begin{cor} The monoids $L_{3}^1$ and $L_5^1$ are  NFB while the sets  of their  isoterms  are FB.
\end{cor}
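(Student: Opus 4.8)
The plan is to assemble the corollary from four ingredients that are already in place: the non-finite basis result of Zhang \cite{WTZ} for $L_3^1$, Corollary \ref{L5} for $L_5^1$, the explicit description of the isoterm sets in Fact \ref{isot}, and the finite basis result of \cite{JS} for $k$-limited words. No new combinatorics is needed; the whole argument is a matter of checking that each monoid meets the hypotheses of the relevant cited statement and then invoking that the finite basis property is an invariant of the generated variety.

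For the non-finite basis half, I would cite \cite{WTZ} directly for $L_3^1$, and for $L_5^1$ I would apply Corollary \ref{L5} with $M = L_5^1$ and $\ell = 5$. Here both hypotheses of that corollary, namely $M \subseteq \var(L_\ell^1)$ for some $\ell \ge 5$ and $L_5^1 \in \var(M)$, reduce to the tautology $L_5^1 \in \var(L_5^1)$, so $L_5^1$ is NFB.

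For the finite basis half, I would use Fact \ref{isot} to replace each isoterm set by an equationally equivalent finite set, $\iso(L_3^1) \sim \{ab\}$ and $\iso(L_5^1) \sim \{abab, a^2b^2, ab^2a\}$, and then use the bridge already recorded inside the proof of Fact \ref{isot}: the set $\{ab\}$ is equationally equivalent to the set of all $1$-limited words and $\{abab, a^2b^2, ab^2a\}$ to the set of all $2$-limited words. Since \cite{JS} asserts that the set of all $k$-limited words is FB for every $k > 0$, it then follows that $\iso(L_3^1)$ and $\iso(L_5^1)$ are FB.

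The single point that deserves a sentence, though it is not a genuine obstacle, is why the finite basis property transfers across these equational equivalences: two word sets generating the same monoid variety are simultaneously FB or NFB. This is immediate once one recalls that an algebra is FB precisely when its generated variety is finitely axiomatizable, and that the relation $\sim$ was defined to mean equality of the generated varieties $\var S^1(W)$. All the real work having been done upstream in Corollary \ref{L5}, Fact \ref{isot}, and the cited external results, the corollary then follows by direct assembly.
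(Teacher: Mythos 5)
Your proposal is correct and follows exactly the paper's own (unwritten-out) argument: the paper derives the corollary by the same direct assembly of Zhang's result \cite{WTZ} for $L_3^1$, Corollary \ref{L5} applied to $M=L_5^1$ with $\ell=5$, Fact \ref{isot}, and the finite basis result for $k$-limited words from \cite{JS}. Your added remark on why the finite basis property transfers across equational equivalence (i.e., across equality of the generated varieties) is the same justification implicit in the paper's definition of $\sim$.
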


 Presently,  $L_{3}^1$, $L_4^1$\cite{IO} and $L_5^1$ are the only existing examples of NFB finite aperiodic monoids whose sets of isoterms are FB.

\begin{question} Is there a finite aperiodic  FB monoid whose set of isoterms is NFB? Is there a finite aperiodic NFB monoid with
central idempotents whose set of isoterms is FB?
\end{question}

\section{Monoids of the form $S^1_\tau(W)$}

Let $\tau$ be a congruence on the free semigroup $\mathfrak A^+$ and $W$ be a non-empty set of words in $\mathfrak A^+$ such that

$\bullet$ $W$ is a union of $\tau$-classes, that is, ${\bf v} \in W$ whenever  ${\bf u} \in W$ and  ${\bf u} \tau {\bf v}$;

$\bullet$ $W$ is closed under taking subwords, that is, ${\bf v} \in W$ whenever  ${\bf u} \in W$ and  ${\bf v}$ is a subword of ${\bf u}$.

Since $W$ is a union of $\tau$-classes the set $I(W) = \mathfrak A^+ \setminus W$ is also a union of $\tau$-classes if it is not empty.
Let $T$ denote the factor-semigroup of $\mathfrak A^+$ over $\tau$ and $T^1$ denote the monoid obtained by adjoining an identity element to $T$.
Let $H_\tau$ denote the homomorphism corresponding to $\tau$ extended to $\mathfrak A^*$ by $H_\tau(\epsilon) =1$ where $\epsilon$ denotes the empty word and $1$ denotes the identity element of $T^1$.
Since $W$ is closed under taking subwords, $H_\tau(I(W))$ is an ideal of $T = H_\tau(\mathfrak A^+)$ and of $T^1 = H_\tau(\mathfrak A^*)$.
We define $S_\tau(W)$ as the Rees quotient of $T$ over $H_\tau (I(W))$ and $S^1_\tau(W)$ as the Rees quotient of $T^1$ over $H_\tau (I(W))$. Notice that $S_\tau(W) = \phi H_\tau (\mathfrak A^+)$ and $S^1_\tau(W) = \phi H_\tau (\mathfrak A^*)$
where $\phi H_\tau({\bf u}) = H_\tau({\bf u})$ if ${\bf u} \in W \cup \{\epsilon\}$ and  $\phi H_\tau({\bf u}) = 0$  if ${\bf u} \in I(W)$.

If $\tau$ is the trivial congruence on $\mathfrak A^+$ then  $S^1_\tau(W)$ coincides with the widely studied monoid $S^1(W)$ defined in the introduction. Also, recall from the introduction that a word ${\bf u}$ is called a {\em $\tau$-term} for a semigroup $S$ if ${\bf u} \tau {\bf v}$ whenever $S \models {\bf u} \approx {\bf v}$.
The following lemma generalizes Fact \ref{prec}.

\begin{lemma} \label{prec1}
Let $\tau$ be a congruence on the free semigroup $\mathfrak A^+$ such that for each $x \in \mathfrak A$ if $x \tau {\bf u}$ then ${\bf u} = x^m$ for some $m>0$.
Let $W$ be a non-empty set of words in $\mathfrak A^+$ which is a union of $\tau$-classes and is closed under taking subwords.
Let $S$  be a semigroup (resp. monoid).
Then  $\var S$ contains $S_\tau(W)$ (resp. $S^1_\tau(W)$) if and only if every word in $W$ is a $\tau$-term for $S$.
\end{lemma}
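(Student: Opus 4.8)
The plan is to follow the proof of Fact~\ref{prec}, replacing the word-valued evaluation by the map $\phi H_\tau$ and isoterms by $\tau$-terms. The object doing all the work is $\phi H_\tau\colon \mathfrak A^*\to S^1_\tau(W)$ (its restriction to $\mathfrak A^+$ in the semigroup case), which is a homomorphism since it is $H_\tau$ composed with a Rees quotient, and which satisfies $\phi H_\tau({\bf w})=H_\tau({\bf w})$ exactly when ${\bf w}\in W$, $\phi H_\tau(\epsilon)=1$, and $\phi H_\tau({\bf w})=0$ when ${\bf w}\in I(W)$. Two standing facts will be used repeatedly: since $W$ is a union of $\tau$-classes, ${\bf w}\in W$ together with ${\bf w}\,\tau\,{\bf w}'$ forces ${\bf w}'\in W$; and since $W$ is closed under subwords, every letter occurring in a word of $W$ lies in $W$.

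For the direction ($\Rightarrow$), assume $S^1_\tau(W)\in\var S$, take ${\bf u}\in W$ with $S\models{\bf u}\approx{\bf v}$, and evaluate this identity in $S^1_\tau(W)$ under the substitution $x\mapsto\phi H_\tau(x)$. The left side returns $H_\tau({\bf u})$, which is neither $0$ nor the adjoined $1$ because ${\bf u}$ is a nonempty word of $W$; hence the right side $\phi H_\tau({\bf v})$ equals the same nonzero, non-identity element. This forces ${\bf v}\in W$ and $H_\tau({\bf v})=H_\tau({\bf u})$, that is ${\bf u}\,\tau\,{\bf v}$, so ${\bf u}$ is a $\tau$-term for $S$.

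For the direction ($\Leftarrow$), assume every word of $W$ is a $\tau$-term for $S$ and let $S\models{\bf p}\approx{\bf q}$; I must verify this identity in $S^1_\tau(W)$ under an arbitrary substitution $\psi$. The key step is to lift $\psi$ to a word substitution $\Theta\colon\mathfrak A\to\mathfrak A^*$ by sending each variable to a word representing its $\psi$-image (a word of $W$ when that image is a nonzero non-identity element, a word of $I(W)$ when it is $0$, and $\epsilon$ when it is $1$), so that $\psi=\phi H_\tau\circ\Theta$. Since identities persist under substitution, $S\models{\bf U}\approx{\bf V}$ with ${\bf U}=\Theta({\bf p})$ and ${\bf V}=\Theta({\bf q})$, and it suffices to show $\phi H_\tau({\bf U})=\phi H_\tau({\bf V})$. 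Assuming for the moment that ${\bf U}$ and ${\bf V}$ are nonempty, I split on whether ${\bf U}\in W$: if ${\bf U}\in W$ it is a $\tau$-term, so ${\bf U}\,\tau\,{\bf V}$, whence ${\bf V}\in W$ and $\phi H_\tau({\bf U})=H_\tau({\bf U})=H_\tau({\bf V})=\phi H_\tau({\bf V})$; if ${\bf U}\notin W$, the symmetric argument shows ${\bf V}\notin W$ (otherwise ${\bf V}$ would be a $\tau$-term and ${\bf V}\,\tau\,{\bf U}$ would place ${\bf U}\in W$), so both sides equal $0$.

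The step I expect to be the main obstacle is the monoid case in which some variable maps to the adjoined identity, so that a lifted word such as ${\bf U}$ or ${\bf V}$ is empty; then its value $1$ is not $\phi H_\tau$ of a nonempty word and the clean dichotomy breaks. I plan to dispose of this using the single-letter hypothesis on $\tau$. If $S$ satisfied any identity ${\bf r}\approx\epsilon$ with ${\bf r}$ nonempty, then substituting $\epsilon$ for all but one variable of ${\bf r}$ would give $S\models z^a\approx\epsilon$ for some letter and some $a\ge 1$, hence for every letter by renaming; choosing a letter $c\in W$ (one exists since $W$ is nonempty and subword-closed) and a letter $z\neq c$ then yields $S\models c\approx cz^a$, and since $cz^a$ is not a power of $c$, the hypothesis on $\tau$ gives $c\not\tau cz^a$, contradicting that $c$ is a $\tau$-term. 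Thus $S$ satisfies no such collapsing identity, which lets me first reduce to substitutions sending no variable to $1$ (replacing ${\bf p}\approx{\bf q}$ by the identity with the $1$-valued variables deleted, still an identity of the monoid $S$): if both reduced words are nonempty the dichotomy applies, and if one reduced word is empty the absence of collapsing identities forces the other to be empty as well, so both sides evaluate to $1$. Specializing $\tau$ to equality recovers Fact~\ref{prec}, a useful check on the argument, and the semigroup statement for $S_\tau(W)$ is the same reasoning with the empty word and the identity element simply absent.
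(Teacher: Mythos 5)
Your proof is correct and takes essentially the same route as the paper's: both directions lift the evaluation in $S_\tau(W)$ (resp.\ $S^1_\tau(W)$) to a word substitution and run the same case analysis — image in $W$ (use the $\tau$-term property plus the fact that $W$ is a union of $\tau$-classes), image in $I(W)$ (both sides are $0$), image empty (monoid case) — exactly as in the paper's Cases 1--3. The only difference is organizational: where the paper disposes of the empty-image case by showing the identity must be regular, you rule out collapsing identities ${\bf r} \approx \epsilon$ after deleting the $1$-valued variables, but both arguments rest on the identical use of the hypothesis on $\tau$, namely that a letter $c \in W$ is a $\tau$-term and cannot be $\tau$-related to $c z^a$.
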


\begin{proof}
$\Rightarrow$ Let $S$ be a semigroup such that $\var S$ contains $S_\tau(W)$. Take ${\bf u} \in W$.
Let us show that $\bf u$ is a $\tau$-term for $S$.

Indeed, suppose that $S \models {\bf u} \approx {\bf v}$. Since $\var S$ contains $S_\tau (W)$ we have $\phi H_\tau({\bf u}) =  \phi H_\tau({\bf v})$. Since ${\bf u} \in W$ we have $\phi H_\tau({\bf u}) =  H_\tau({\bf u}) \ne 0$.
If ${\bf v} \not \in W$ then $\phi H_\tau({\bf v}) =0$.
Thus ${\bf v} \in W$ and consequently, we have $H_\tau({\bf u}) = \phi H_\tau({\bf u}) = \phi H_\tau({\bf v}) = H_\tau({\bf v})$.
Thus ${\bf u} \tau {\bf v}$. Therefore $\bf u$ is a $\tau$-term for $S$.

$\Leftarrow$ Let $S$ be a semigroup (resp. monoid) such that  every word in $W$ is a $\tau$-term for $S$.
Let  ${\bf u} \approx {\bf v}$ be an identity of $S$ and
$\Theta$ be a substitution $\mathfrak A \rightarrow S_\tau (W)$ (resp. $\mathfrak A \rightarrow S^1_\tau (W)$). If ${\bf u} = c_1 \dots c_r$ and ${\bf v}=d_1 \dots d_l$ for some not necessarily distinct letters $c_1, \dots, c_r$ and $d_1, \dots, d_l$, then
$\Theta({c_1}) = \phi H_\tau({\bf u}_1), \dots, \Theta(c_r) =  \phi H_\tau({\bf u}_r), \Theta({d_1}) = \phi H_\tau({\bf v}_1), \dots, \Theta(d_l) = \phi H_\tau({\bf v}_l)$ for some not necessarily distinct words
${\bf u}_1, \dots, {\bf u}_r, {\bf v}_1, \dots, {\bf v}_l$ from $\mathfrak A^+$ (resp. from $\mathfrak A^*$). Modulo duality three cases are possible.

{\bf Case 1:} Both ${\bf u}_1 \dots {\bf u}_r$ and ${\bf v}_1 \dots {\bf v}_l$ belong to $I(W)$.

In this case
\[\Theta({\bf u}) = \phi H_\tau ({\bf u}_1 \dots {\bf u}_r) = 0 = \phi H_\tau ({\bf v}_1 \dots {\bf v}_l) = \Theta({\bf v}).\]

{\bf Case 2:} ${\bf u}_1 \dots {\bf u}_r \in W$.

In this case, since $S \models {\bf u} \approx {\bf v}$ we have  $S \models ({\bf u}_1 \dots {\bf u}_r) \approx ({\bf v}_1 \dots {\bf v}_l)$.
Since  ${\bf u}_1 \dots {\bf u}_r$ is a $\tau$-term for $S$ we have $({\bf u}_1 \dots {\bf u}_r) \tau ({\bf v}_1 \dots {\bf v}_l)$.
Since $W$ is a union of $\tau$-classes, ${\bf v}_1 \dots {\bf v}_l \in W$.
Therefore,
\[\Theta({\bf u}) = \phi H_\tau ({\bf u}_1 \dots {\bf u}_r) = H_\tau ({\bf u}_1 \dots {\bf u}_r) =  H_\tau ({\bf v}_1 \dots {\bf v}_l) =
\phi H_\tau ({\bf v}_1 \dots {\bf v}_l) = \Theta({\bf v}).\]

{\bf Case 3:} ${\bf u}_1 = \dots = {\bf u}_r = \epsilon$.

This case is possible only if $S$ is a monoid. In this case, ${\bf u} \approx {\bf v}$ is a regular identity.
(Indeed, if for some $y \in \mathfrak A$ we have $y \in \con({\bf u})$ but $y \not \in \con({\bf v})$,
then $S \models x \approx xy^n$ for some $x \ne y$ and $n>0$. Since $W$ is closed under taking subwords, $x$ is a $\tau$-term for $S$ and consequently, $x \hskip .03in \tau \hskip .03in xy^c$, which is forbidden by our assumption about $\tau$.)
Since  $\con({\bf u}) = \con({\bf v})$ we have ${\bf v}_1 =\dots ={\bf v}_l = \epsilon$. Consequently, $\Theta({\bf u}) = 1 = \Theta ({\bf v})$.

Thus we have proved that every identity of $S$ holds in $S_\tau(W)$ and if $S$ is a monoid then every identity of $S$ holds in $S^1_\tau(W)$.
Therefore, $\var S$ contains $S_\tau(W)$ and if $S$ is a monoid then $\var S$ contains $S^1_\tau(W)$.
\end{proof}

Let $\tau$ be the relation on the free semigroup $\mathfrak A^+$ defined by ${\bf u} \tau {\bf v}$ if and only if $\bf u$ and $\bf v$ are of the same type and let $W_\ell$ be the set of all subwords of $\underbrace{b^+a^+b^+a^+b^+\cdots}_{\text{height } \ell}$.

Then for each $\ell \ge 2$, Lee semigroup $L_\ell$ is isomorphic to $S_\tau(W_\ell)$ and Lee monoid $L_{\ell}^1$ is isomorphic to $S^1_\tau(W_\ell)$. Thus Lemma \ref{prec1} immediately implies the following.

\begin{cor} \label{final} Let $\ell \ge 2$ and $S$ be a semigroup (resp. monoid).
Then $\var S$ contains $L_{\ell}$ (resp. $L_{\ell}^1$) if and only if $S$ satisfies Property (C$_\ell$), that is, every word in $\{x,y\}^+$ of height at most $\ell$ can form an identity of $S$ only with a word of the same type.
\end{cor}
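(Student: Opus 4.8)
The plan is to obtain Corollary \ref{final} as a direct application of Lemma \ref{prec1}, specialized to the congruence $\tau$ (``same type'') and the set $W_\ell$, and then to translate the conclusion back into the language of Property (C$_\ell$). The two isomorphisms recorded just above, $L_\ell \cong S_\tau(W_\ell)$ and $L_\ell^1 \cong S^1_\tau(W_\ell)$, let me replace ``$\var S$ contains $L_\ell$ (resp. $L_\ell^1$)'' by ``$\var S$ contains $S_\tau(W_\ell)$ (resp. $S^1_\tau(W_\ell)$)'', which is exactly the hypothesis side of Lemma \ref{prec1}.

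First I would verify that $\tau$ and $W_\ell$ satisfy the hypotheses of Lemma \ref{prec1}. That $\tau$ is a congruence on $\mathfrak A^+$ follows from the observation that $\tau$-related words have the same first and last variable, so concatenating two $\tau$-equivalent pairs merges the boundary islands in the same way on both sides and preserves type; moreover, if $x \tau {\bf u}$ for a single variable $x$, then ${\bf u}$ is a power of $x$, which is precisely the extra condition on $\tau$ demanded by the lemma. For $W_\ell$, being a union of $\tau$-classes is immediate because same-type words have the same height and the same content, and closure under factors holds because passing to a factor can only decrease the number of islands; finally $W_\ell$ is non-empty. With these checks in place, Lemma \ref{prec1} yields that $\var S$ contains $S_\tau(W_\ell)$ (resp. $S^1_\tau(W_\ell)$) if and only if every word in $W_\ell$ is a $\tau$-term for $S$. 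The last step is to recognize that this is verbatim Property (C$_\ell$): unwinding the definition, a word $w$ is a $\tau$-term for $S$ exactly when $S \models w \approx {\bf v}$ forces ${\bf v}$ to be of the same type as $w$, and this $\tau$-term condition is invariant under renaming the letters $a,b$ to $x,y$.

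The only point demanding care — and the place where the argument really hinges — is identifying $W_\ell$ with the full set of two-generated words of height at most $\ell$, so that quantifying the $\tau$-term property over $W_\ell$ matches Property (C$_\ell$) without gaps. I would argue that the factors of the alternating pattern $\underbrace{b^+a^+b^+a^+b^+\cdots}_{\text{height } \ell}$ are precisely the words over $\{a,b\}$ of height at most $\ell$: every such factor is an alternating word of height at most $\ell$, and conversely every alternating word of height at most $\ell$ occurs as a factor, since a factor beginning at an interior island may start with either letter and thus both $a$-initial and $b$-initial words are captured. Once this identification is secured, the equivalence supplied by Lemma \ref{prec1} reads off as the equivalence asserted in Corollary \ref{final}, with no further computation required.
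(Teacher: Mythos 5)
Your overall route is exactly the paper's: the corollary is obtained by specializing Lemma \ref{prec1} to the ``same type'' congruence $\tau$ and the set $W_\ell$, using the isomorphisms $L_\ell \cong S_\tau(W_\ell)$ and $L_\ell^1 \cong S^1_\tau(W_\ell)$. Your verifications that $\tau$ is a congruence satisfying the hypothesis on single variables, and that $W_\ell$ is non-empty, a union of $\tau$-classes, and closed under subwords, are all fine. However, the step you yourself single out as the crux is wrong: $W_\ell$ is \emph{not} the set of all words over $\{a,b\}$ of height at most $\ell$. A subword of the pattern $b^+a^+b^+\cdots$ (height $\ell$) that begins with $a$ must begin at an $a$-island, of which the leftmost is the second island of the pattern, so an $a$-initial subword has height at most $\ell-1$. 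Concretely, for $\ell=3$ the word $a^+b^+a^+$ has height $3$ but is not a subword of $b^+a^+b^+$. This asymmetry is not a technicality you can argue away: it is precisely what makes $S_\tau(W_\ell)$ isomorphic to $L_\ell$. In $L_3$ one has $aba=0$ while $bab\neq 0$; if $W_3$ contained both height-$3$ types, the Rees quotient $S_\tau(W_3)$ would have both $[aba]$ and $[bab]$ as nonzero elements and would not be $L_3$. So your claimed identification actually contradicts the isomorphisms your first paragraph relies on.

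The gap is repairable, and the repair is a point you only gesture at: invariance of the $\tau$-term property under bijective renaming of variables. Since identities of $S$ are closed under renaming and ``same type'' is renaming-invariant, a word is a $\tau$-term for $S$ if and only if any renaming of it is. Every word in $\{x,y\}^+$ of height at most $\ell$ becomes a word of $W_\ell$ after renaming its initial letter to $b$ and the other letter to $a$ (regardless of whether it is $x$-initial or $y$-initial), and conversely every word of $W_\ell$ is a renaming of such a word. Hence ``every word in $W_\ell$ is a $\tau$-term for $S$'' is equivalent to Property (C$_\ell$) even though $W_\ell$ is strictly smaller than the set of all two-letter words of height at most $\ell$. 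With the false identification replaced by this renaming argument, your proof goes through and coincides with the paper's.
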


\subsection*{Acknowledgement} The author thanks Edmond Lee for the suggestion to prove that all monoids $L_\ell^1$ are NFB
and for many interesting discussions. The author also thanks an anonymous referee for many helpful comments, Mark Sapir for the encouragement and discussions and Inna Mikhaylova for discussions.

\end{document}